\newtheorem{thm}{Theorem}[section]  
\newtheorem{cor}[thm]{Corollary}
\newtheorem{defin}[thm]{Definition} 
\newtheorem{lemma}[thm]{Lemma} 
\newtheorem{example}[thm]{Example} 
\newtheorem{prop}[thm]{Proposition}
\begin{document}  

\newcommand{\aaa}{\mbox{$\alpha$}}
\newcommand{\map}{\mbox{$\rightarrow$}}
\newcommand{\kkk}{\mbox{$\kappa$}}
\newcommand{\bbb}{\mbox{$\beta$}}
\newcommand{\sss}{\mbox{$\sigma$}}  
\newcommand{\ddd}{\mbox{$\delta$}} 
\newcommand{\rrr}{\mbox{$\rho$}} 
\newcommand{\Ggg}{\mbox{$\Gamma$}}
\newcommand{\ttt}{\mbox{$\tau$}} 
\newcommand{\bdd}{\mbox{$\partial$}}
\newcommand{\zzz}{\mbox{$\zeta$}}
\newcommand{\Ss}{\mbox{$\Sigma$}}
\newcommand{\Ddd}{\mbox{$\Delta$}}
\newcommand{\aub} {\mbox{$A \cup_{P} B$}}
\newcommand{\xuy} {\mbox{$X \cup_{Q} Y$}}
\newcommand{\Lll}{\mbox{$\Lambda$}}
\newcommand{\inter}{\mbox{${\rm int}$}}

\title{Berge's distance $3$ pairs of genus $2$ Heegaard splittings}  
\author{Martin Scharlemann} 
\date{\today}

\thanks{Research partially supported by an NSF grant. 
I am very grateful to John Berge for his discovery of these examples, for his help in understanding the examples, and for his very useful comments on earlier drafts of this paper.}

\begin{abstract}  Following an example discovered by John Berge \cite{Be2}, we show that there is a $4$-component link $L \subset (S^1 \times S^2)\#(S^1 \times S^2)$ so that, generically, the result of Dehn surgery on $L$ is a $3$-manifold with two inequivalent genus $2$ Heegaard splittings, and each of these Heegaard splittings is of Hempel distance $3$.  
\end{abstract}

\maketitle

\section{Introduction}
In \cite{Be2} John Berge introduces a criterion which, if satisfied by a genus two Heegaard splitting of a $3$-manifold, ensures that the Heegaard splitting is of distance $3$ or greater.  Furthermore, he gives an example of a pair of such Heegaard splittings of the same $3$-manifold, splittings which he shows to be inequivalent.   Such an example demonstrates conclusively that the list in \cite{RS} of possible manifolds with two or more inequivalent genus $2$ Heegaard splittings is incomplete.

Inspired by Berge's example, here we give a somewhat different view of his distance $3$ criterion (Sections \ref{sect:criterion} and \ref{sect:SUMS}) and then, after some preliminary discussion (Sections \ref{sect:standtrip} and \ref{sect:digress}), describe a general way of constructing further examples of the phenomenon he found (Sections \ref{sect:construct} and \ref{sect:afterword}).  All the examples are so-called Dehn derived pairs of splittings, specifically those of type $M_H$ described in \cite[Section 3]{BS}.  What we show here is that in a specific infinite family of Dehn-derived pairs of splittings, generic examples satisfy Berge's criterion and so are of Hempel distance $3$.  

\section{Berge's criterion for Heegaard distance $3$} \label{sect:criterion}

The goal of this section is to describe a criterion (see \cite{Be2}) which guarantees that a genus $2$ Heegaard splitting of a closed orientable $3$-manifold has Hempel distance at least $3$.  

Suppose that $H$ is a genus $2$ handlebody with oriented boundary $F$ and $\{ A, B \}$ is a complete set of meridian disks for $H$.  Let $\Gamma \subset F$ be the boundary of a third meridian disk for $H$ that separates $H$ into two solid tori $H_A$ and $H_B$ containing $A$ and $B$ respectively.  $\Gamma$ separates the surface $F$ into two punctured tori, $F_A \subset \bdd H_A$ and $F_B \subset \bdd H_B$.  

\begin{lemma}[Be2, Theorem 4.2, Claim 1]  \label{lemma:bergeclaim1}
Suppose $C$ is a simple closed curve in $F$ with this property: some arc in $F_A \cap C$ (resp $F_B \cap C$) intersects $\bdd A$ (resp $\bdd B$) at least twice. Then for any  meridian disk $D$ of $H$, $D \cap C \neq \emptyset$.
\end{lemma}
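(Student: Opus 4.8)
The plan is to argue by contradiction: suppose some meridian disk $D$ of $H$ has $D \cap C = \emptyset$. Since $C \subset F$ and $D \cap F = \partial D$, this is the same as $\partial D \cap C = \emptyset$, so it suffices to force a crossing of $\partial D$ with $C$. First I would record that $C$ must meet $\Gamma$: the hypothesis refers to an \emph{arc} of $C$ in $F_A$, i.e.\ a component of $C \cap F_A$ with endpoints on $\Gamma$, so $C \cap \Gamma \neq \emptyset$. Next I would put $C$ in minimal (bigon-free) position with respect to the pairwise disjoint curves $\partial A$, $\partial B$, $\Gamma$, so that the hypothesis supplies an arc $a$ of $C \cap F_A$ whose two crossings with $\partial A$ are essential; in particular $i(a,\partial A) \geq 2$, and symmetrically on the $B$ side. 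Let $E$ be the meridian disk with $\partial E = \Gamma$, isotope $D$ to meet $E$ minimally, and remove closed curves of $D \cap E$ by the usual innermost-disk argument, so that $D \cap E$ is a collection of arcs. The proof then splits according to whether $\partial D$ meets $\Gamma$.

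If $\partial D \cap \Gamma = \emptyset$, then $D$ lies in $H_A$ or in $H_B$, and $\partial D$ is either parallel to $\Gamma$ or a meridian of that solid torus. If $\partial D$ is $\Gamma$-parallel then $\partial D \cap C \neq \emptyset$ since $C \cap \Gamma \neq \emptyset$. Otherwise $\partial D$ is isotopic to $\partial A$ (or to $\partial B$), whence $i(C,\partial D) = i(C,\partial A) \geq i(a,\partial A) \geq 2 > 0$, so again $\partial D \cap C \neq \emptyset$. Either way we contradict $\partial D \cap C = \emptyset$.

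The substantive case is $\partial D \cap \Gamma \neq \emptyset$, where $D \cap E$ is a nonempty union of arcs. Choosing an outermost such arc $\delta$ in $D$ cuts off a subdisk $D_0 \subset D$ with $\partial D_0 = \delta \cup \beta$, where $\delta \subset E$ and $\beta$ is an arc of $\partial D$ lying in $F_A$ or in $F_B$; say $F_A$, so $D_0 \subset H_A$. Minimality of $D \cap E$ forces $\beta$ to be essential in $F_A$ (a $\partial$-parallel $\beta$ would let us isotope $\delta$ away, reducing $|D \cap E|$), so $D_0$ is an essential, hence meridian, disk of the solid torus $H_A$, and $\beta \cup \delta$ is isotopic in $\partial H_A$ to $\partial A$. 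Since $\beta \subset \partial D$ we have $\beta \cap C = \emptyset$. Here is the key point: $F_A$ is a once-punctured torus and $\beta$ is essential, so $F_A \setminus \beta$ is an annulus $N$, and the core of $N$ — obtained by closing $\beta$ with a $\Gamma$-parallel arc — is isotopic to $\beta \cup \delta$, hence to $\partial A$. If $a \cap \beta = \emptyset$ then $a$ is properly embedded in $N$ with endpoints on $\partial N$; such an arc crosses the core at most once, giving $i(a,\partial A) = i(a,\mathrm{core}(N)) \leq 1$, contradicting $i(a,\partial A) \geq 2$. Hence $a \cap \beta \neq \emptyset$, so $C \cap \partial D \neq \emptyset$, the desired contradiction. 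If instead the outermost disk lies in $H_B$, the same argument runs with $B$, $F_B$ in place of $A$, $F_A$; this is exactly why the hypothesis is imposed on \emph{both} sides, since the side on which the outermost disk falls is not under our control.

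I expect the annulus step of the final case to be the main obstacle, both in getting the combinatorics right and in seeing precisely why \emph{two} crossings are needed: a single crossing would only give $i(a,\partial A) = 1$, which is compatible with $a$ lying in $F_A \setminus \beta$, so the criterion would genuinely fail. The remaining points are routine: justifying minimal position and the innermost/outermost reductions, and checking that closing $\beta$ reproduces $\partial A$ up to isotopy rather than some other slope.
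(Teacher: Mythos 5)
Your proof is correct, but it takes a genuinely different route from the paper's. The paper never introduces the disk bounded by $\Gamma$: it cuts $D$ along the complete system $A \cup B$, so that the outermost arc $\alpha \subset \partial D$ has both ends on one copy of $\partial A$ in the $4$-punctured sphere $F - (A\cup B) = P_A \cup_\Gamma P_B$, and it concludes in two sub-cases: if $\alpha$ stays in the pair of pants $P_A$, an endpoint count shows $C$ would have to meet one copy of $\partial A$ more often than the other, which is absurd; if $\alpha$ crosses $\Gamma$, its subarcs in $P_B$ span from $\Gamma$ to $\Gamma$ and therefore cross the hypothesized arc of $C \cap P_B$ joining the two copies of $\partial B$. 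Thus the paper closes the main case using the hypothesis on the \emph{opposite} handle, whereas you close every case on the \emph{same} handle: you cut along the separating disk $E$ with $\partial E = \Gamma$, and your key step --- the outermost arc $\beta$ caps off through $E$ to a meridian of $H_A$, so any arc of $C$ disjoint from $\beta$ lies in the annulus $F_A \setminus \beta$ and has intersection number at most one with the meridian slope --- is a slope argument in a once-punctured torus rather than arc-counting in pairs of pants. Each buys something: the paper's pair-of-pants bookkeeping (the $p$ and $q$ arc counts) is exactly what gets reused in Lemmas \ref{lemma:altAB1}, \ref{lemma:altAB2} and Proposition \ref{prop:Lemma4_4}, so its proof feeds the SUMS machinery, while yours is local to one handle, matches the slope/denominator formalism of Section \ref{sect:criterion}, and makes transparent why ``at least twice'' is the sharp threshold. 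Two details you should state explicitly, both of which do work in your setup: first, $\beta \cup \delta$ must be isotopic to $\partial A$ \emph{in $F_A$}, not merely in the torus $\partial H_A$, before you may write $i(a,\mathrm{core}(N)) = i(a,\partial A)$ (true, since non-separating curves in a once-punctured torus are classified up to isotopy by their slope); second, the reductions of $|D \cap E|$ (removing circles and $\partial$-parallel outermost arcs) must be carried out keeping $D$ disjoint from $C$, which holds because bigon-freeness of $C$ against $\Gamma$, together with $C \cap \partial D = \emptyset$, forces the disks guiding those isotopies to be disjoint from $C$.
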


\begin{proof}  First we observe that there are no simple closed curves in the $4$-punctured sphere $F - (A \cup B)$ that are disjoint from $C$.  Indeed, $F - (A \cup B)$ is the union along $\Gamma$ of the $3$-punctured spheres (i. e. pairs of pants) $P_A = F_A - A$ and $P_B = F_B - B$.  The hypothesis implies that in $C \cap P_A$ (resp $C \cap P_B$) there is a subarc of $C$ with an end at each copy of $\bdd A$ (resp $\bdd B$).  But the only simple closed curve in the $4$-punctured sphere $F - (A \cup B)$ that is disjoint from these arcs is $\Gamma$.   $\Gamma$ separates $A$ from $B$, and $C$ intersects both $A$ and $B$, so $\Gamma \cap C \neq \emptyset$.  

Now suppose that $D$ is a meridian disk for $H$.  We have just observed that if $\bdd D$ is disjoint from $A \cup B$ then it intersects $C$.  So suppose $\bdd D$ intersects $A, B$ and let $D_0$ be an outermost disk of $D$ cut off by $A, B$.  Then $\bdd D_0$ intersects $F - (A \cup B)$ in an arc $\alpha$ with both ends at a single copy of, say, $\bdd A$ in $\bdd(F - (A \cup B))$.  $\alpha$ can't lie entirely in $P_A$, else $C$ would intersect one copy of $\bdd A$ more than it does the other.  But any arc of $\alpha \cap P_B$ has both ends on $\Gamma$ and so intersects the arc of $C \cap P_B$ identified above that has one end on each copy of $B$.
\end{proof}

Let $\lambda_a \subset F_A$ (resp $\lambda_b \subset F_B$) be simple closed curves which intersect $\bdd A$ (resp $\bdd B$) in exactly one point.  Orient $A, B, \lambda_a, \lambda_b$ so that the pairs of orientations $\lambda_a, \bdd A$ and $\lambda_b, \bdd B$ induce the given orientation on $F$ at the points of intersection.  

Let $\alpha$ be a properly embedded essential arc or simple closed curve in $F_A$, say.  Define $\rho_A(\alpha) \in \mathbb{Q} \cup \infty$ by $$\rho_A(\alpha) = \frac{\alpha \cdot \lambda_a}{\alpha \cdot \bdd A}.$$  Different choices of $\lambda_a$ will alter the numerator by multiples of the denominator, so, given $\alpha$, as long as $\rho_A(\alpha) < \infty$, we can choose $\lambda_a$ so $0 \leq \rho_A(\alpha) < 1$.  An alternate view of $\rho_A(\alpha)$ is useful:  lift $F_A$ to its universal abelian cover $\mathbb{R}^2 - \mathbb{Z}^2$ so that $\bdd A$ is vertical and $\lambda_a$ is horizontal.  Then $\rho_A(\alpha)$ is the slope of any lift of $\alpha$ to  $\mathbb{R}^2 - \mathbb{Z}^2$.  Similarly define $\rho_B$ for properly embedded essential arcs in $F_B$.

For any pair of rational numbers, $\frac pq, \frac rs$ let $$ |\frac pq, \frac rs| = |ps - qr|,$$ the absolute value of the determinant of the associated $2 \times 2$ matrix. Then $\rho_A$ has the following pleasant property:  Suppose $\alpha, \beta$ are both properly embedded arcs in $F_A$ isotoped to intersect minimally.  Then either $\alpha$ and $\beta$ are parallel and $\rho_A(\alpha) = \rho_A(\beta)$ or the arcs intersect in exactly $|\rho_A(\alpha), \rho_A(\beta)| - 1$ points.  In particular, $\rho_A$ can take on at most three distinct values for arcs appearing in any given collection $C$ of disjoint essential arcs in $F_A$. (A significant example would be slopes $0, 1, \infty$.  Moreover, the set of denominators (all of which we may take to be non-negative) that appear among the values of $\rho_A$ do not depend on the choice of $\lambda_A$; they represent the number of times each choice of arc in $C$ intersects $\bdd A$.  This motivates the following definition:

\begin{defin} \label{defin:denom}  Suppose $C$ is a disjoint collection of simple closed curves in $F$ that essentially intersects $\Gamma$.  Let $denom_A(C) \subset \mathbb{N}_+$ be the set of denominators that appear among arcs in $C \cap F_A$.  Similarly, let $denom_B(C) \subset \mathbb{N}_+$ be the set of denominators that appear among arcs in $C \cap F_B$.  
\end{defin}

\begin{defin}  \label{defin:hdenom} Suppose $C \subset F$ is a collection of disjoint simple closed curves in $F$.  Then $C$ has {\em high denominators} with respect to $A$ (resp. $B$) if there are $r, s \in denom_A(C)$ (resp $denom_B(C)$ so that $2 \leq r \leq s - 2$.  
\end{defin}

\begin{prop}  \label{prop:hmeansself} Suppose $C \subset F$ is a set  of disjoint simple closed curves in $F$ that has high denominators with respect to $A$ (resp $B$).  Then any simple closed curve $C'$ that is disjoint from $C$ has the property that each component of $C' \cap F_A$ (resp $C' \cap F_B$) intersects $\bdd A$ (resp $\bdd B$) at least twice.  
\end{prop}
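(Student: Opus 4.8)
The plan is to translate the statement into slopes and reduce it to an elementary computation in the Farey graph of the once-punctured torus $F_A$ (the case of $F_B$ being identical). After isotoping $C$ and $C'$ into minimal position, recall from the discussion preceding the proposition that every essential properly embedded arc in $F_A$ is determined up to isotopy by its slope $\rho_A \in \mathbb{Q}\cup\infty$, written in lowest terms as $p/q$, and that the denominator $q$ equals the geometric intersection number of the arc with $\bdd A$. The intersection formula recalled in the text says that two arcs of distinct slopes $p/q$ and $p'/q'$ meet in $|pq'-p'q| - 1$ points, so they are disjoint precisely when $|pq'-p'q| = 1$, i.e. precisely when their slopes are Farey-adjacent; an essential simple closed curve in $F_A$ is likewise disjoint from an arc only when their slopes agree.

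Since $C$ has high denominators with respect to $A$, I can choose arcs $a_r, a_s$ of $C \cap F_A$ whose denominators are $r$ and $s$ with $2 \le r \le s-2$, and write their slopes as $\sigma_r = p_r/r$ and $\sigma_s = p_s/s$. As components of the single disjoint family $C$ they are disjoint, and $r \ne s$ forces $\sigma_r \ne \sigma_s$, so by the previous remark $\sigma_r, \sigma_s$ span a Farey edge, $|p_r s - p_s r| = 1$. Now let $\alpha'$ be any component of $C' \cap F_A$. As $C'$ is disjoint from $C$, $\alpha'$ is disjoint from both $a_r$ and $a_s$. A closed-curve component would have to share its slope with each of $\sigma_r$ and $\sigma_s$, which is impossible; hence $\alpha'$ is an essential arc whose slope $\sigma'$ is adjacent to, or equal to, both $\sigma_r$ and $\sigma_s$.

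The slopes that are adjacent-or-equal to both endpoints of the Farey edge $\{\sigma_r,\sigma_s\}$ are exactly four: the two endpoints, together with the two third vertices of the two Farey triangles meeting along that edge, namely $\frac{p_r+p_s}{r+s}$ and $\frac{p_r-p_s}{r-s}$. A one-line computation using $|p_r s - p_s r| = 1$ shows these two fractions are already in lowest terms, so their denominators are exactly $r+s$ and $|r-s| = s-r$. Thus the denominator of $\sigma'$ lies in $\{r,\, s,\, r+s,\, s-r\}$, and the inequalities $2 \le r \le s-2$ give $r \ge 2$, $s \ge 4$, $r+s \ge 6$ and $s-r \ge 2$. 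In every case the denominator of $\alpha'$ is at least $2$, i.e. $\alpha'$ meets $\bdd A$ at least twice, as claimed.

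The substance of the argument is the Farey-graph fact that an edge lies on exactly two triangles, and I expect the crux to be the control of the \emph{difference} third vertex $\frac{p_r-p_s}{r-s}$: its denominator $s-r$ is at least $2$ only because $r \le s-2$. Were $s = r+1$ this vertex would have denominator $1$, and were $r = s$ it would be the slope $\infty$ of $\bdd A$ itself (denominator $0$); either would yield a disjoint arc crossing $\bdd A$ fewer than twice, so this is exactly where the high-denominator hypothesis is indispensable. The remaining work is bookkeeping: arranging minimal position so that all components of $C' \cap F_A$ are essential (inessential arcs and closed curves, which could avoid $\bdd A$, are removed or excluded as above), and verifying the lowest-terms claim for the two mediants.
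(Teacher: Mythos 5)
Your proof is correct and rests on the same ingredients as the paper's: the determinant formula for intersections of arcs in terms of their slopes, the fact that disjoint non-parallel arcs have Farey-adjacent slopes, and the arithmetic consequences of $2 \leq r \leq s-2$. The only difference is organizational: the paper argues by contradiction, normalizing a hypothetical low-denominator arc to slope $\frac{1}{0}$ or $\frac{0}{1}$ and showing the arcs $\beta_r, \beta_s$ would then be forced to intersect each other, whereas you directly enumerate the four slopes compatible with the Farey edge spanned by $\sigma_r, \sigma_s$ (its two endpoints and the two mediants) and observe that their denominators $r, s, r+s, s-r$ are all at least $2$.
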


\begin{proof}  The alternative is that there is an arc $\alpha$ among the components of $C' \cap F$ with denominator $0$ or $1$ and so slope $\frac 10$ or $\frac 01$.  But for such an arc to be disjoint from the arc $\beta_r$ (resp $\beta_s$) with denominator $r \geq 2$ (resp $s \geq 4$), $\frac 10$ is impossible and the slope of $\beta_r$ must then be $\frac {\pm 1}r$ and the slope of $\beta_s$ must be $\frac {\pm 1}s$.  But this makes $$|\rho_A(\beta_r), \rho_A(\beta_s)| = |\pm r \pm s| > 1,$$ contradicting the fact that $\beta_r$ and $\beta_s$ are disjoint.
\end{proof}

\begin{defin}  \label{defin:dhdenom} Suppose $C_1, C_2 \subset F$ is a pair of non-parallel, non-separating disjoint simple closed curves in $F$.  For each $i = 1, 2$, a $C_i$-rectangle in $F_A$ (resp $F_B$) is a pair of parallel arcs of $C_i \cap F_A$ (resp $C_i \cap F_B$) so that the region between the parallel arcs is disjoint from $C_1 \cup C_2$.  

The pair $C_1, C_2$ satisfies the {\em high denominator rectangle condition} with respect to $A$ (resp. $B$) if, for each $i = 1, 2$,
there are at least two $C_i$-rectangles in $F_A$ (resp. $F_B$), one of denominator $r_i \geq 2$ and one of denominator $s_i \geq r_i + 2$.  

The pair $C_1, C_2$ satisfies the {\em high denominator rectangle condition} in $F$ if it satisfies the high denominator rectangle condition with respect to both $A$ and $B$.
\end{defin}

Notice that an argument that $C_1, C_2$ satisfies the high denominator rectangle condition in $F$ will involve a minimum of $16$ arcs:  two arcs in each rectangle, and two rectangles from each of $C_1, C_2$ in each of $F_A$ and $F_B$.  

\begin{prop}  \label{prop:dhmeansh} Suppose $H \cup_F J$ is a Heegaard splitting and $J$ has a complete pair of meridian disks $X, Y$ so that the pair $\bdd X, \bdd Y$ satisfies the high denominator rectangle condition with respect to $A$.   Suppose $E$ is any non-separating meridian disk for $J$.  Then $\bdd E$ has high denominators with respect to $A$.
\end{prop}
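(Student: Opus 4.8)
The plan is to put $E$ in minimal position with respect to $X \cup Y$ and then to show that the two well-separated high denominators guaranteed by the $\partial X$-rectangles (and symmetrically the $\partial Y$-rectangles) are forced to reappear, up to slope, among the arcs of $\partial E \cap F_A$. First I would cut $J$ along $X \cup Y$ to obtain a $3$-ball $B$ whose boundary sphere is the union of the four-punctured sphere $F'$ ($= F$ cut along $\partial X \cup \partial Y$) with the four disk copies $X^{\pm}, Y^{\pm}$. Minimality guarantees that $E \cap (X \cup Y)$ is a collection of essential arcs and that $E$ is cut into subdisks, each bounding a disk in $\partial B$; this is the standard setting in which $\partial E$ is reconstructed from arcs of $F'$ running between the copies $\partial X^{\pm}, \partial Y^{\pm}$. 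I would also arrange that $\partial E$ meets $\Gamma$ essentially, so that no component of $\partial E \cap F_A$ is $\partial$-parallel in $F_A$.

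The heart of the argument is to exploit the emptiness of the rectangle bands. Fix the $\partial X$-rectangle of denominator $r \geq 2$: two parallel arcs $x_0, x_1$ of $\partial X \cap F_A$, each running from $\Gamma$ to $\Gamma$, cobounding a band $R \subset F_A$ with $R \cap (\partial X \cup \partial Y) = \emptyset$. I claim minimal position forces $\partial E$ to traverse $R$ parallel to $x_0, x_1$. Indeed, a subarc of $\partial E \cap R$ that entered and left $R$ through the same side would cut off, together with a subarc of $x_0$ or $x_1$, a bigon with $\partial X$, removable against minimality; and a subarc entering and leaving through the same $\Gamma$-end of $R$ would be $\partial$-parallel in $F_A$, contrary to the essential position of $\partial E$ on $\Gamma$. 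Hence some component of $\partial E \cap F_A$ runs from $\Gamma$ to $\Gamma$ parallel to $x_i$ across $R$, and is therefore isotopic to $x_i$ in $F_A$, with slope of denominator exactly $r$. Repeating with the $\partial X$-rectangle of denominator $s \geq r+2$ produces a component of $\partial E \cap F_A$ of denominator $s$; the $\partial Y$-rectangles are held in reserve for the (a priori possible) case that $\partial E$ is disjoint from, or runs essentially parallel to, $\partial X$, where the $X$-side bands cannot be reached.

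It then remains only to assemble the conclusion. Since all components of $\partial E \cap F_A$ are disjoint arcs of a single simple closed curve, the determinant identity recorded before Definition \ref{defin:denom} forces the distinct slopes that occur to be pairwise Farey neighbors (determinant one), so their denominators form a set of the shape $\{a, b, a+b\}$; in particular the two denominators $r$ and $s$ we have produced are genuinely distinct entries of $denom_A(\partial E)$. As they satisfy $2 \leq r \leq s - 2$, Definition \ref{defin:hdenom} shows that $\partial E$ has high denominators with respect to $A$, as desired.

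The main obstacle is the middle step: one must argue carefully, by innermost-disk and outermost-arc reductions in the ball $B$, that minimality genuinely forces $\partial E$ to cross each empty rectangle band $R$ in a full $\Gamma$-to-$\Gamma$ arc parallel to its sides, rather than repeatedly puncturing the sides $x_0, x_1$ (which are legitimate, bigon-free crossings with $\partial X$) and so arriving only in short, broken parallel segments. This is exactly the point at which having \emph{two} parallel copies $x_0, x_1$ together with an empty band between them --- the content of Definition \ref{defin:dhdenom}, as opposed to a single high-denominator arc of $\partial X$ --- is what protects a complete denominator-$r$ arc of $\partial E$ from being destroyed; the non-separating hypothesis on $E$ is what guarantees that $\partial E$ cannot be isotoped off enough of the rectangle data to evade both the $X$- and $Y$-side bands at once.
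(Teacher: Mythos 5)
There is a genuine gap, and it sits exactly where you yourself place ``the main obstacle'': nothing in your argument shows that $\bdd E$ must enter a rectangle at all, let alone traverse it. Your bigon/$\bdd$-parallelism analysis only constrains components of $\bdd E \cap R$ that are assumed to exist, and even that analysis is incomplete: a subarc of $\bdd E$ can enter $R$ through the side $x_0$ and leave through $x_1$, or enter through $x_0$ and leave through a $\Gamma$-side; such crossings create no bigon with $\bdd X$ and no $\bdd$-parallel arc, so minimality does not exclude them, and they produce exactly the ``short, broken parallel segments'' you worry about. Your closing assertion that the non-separating hypothesis keeps $\bdd E$ from ``evading'' the rectangles is the entire content of the proposition; asserting it is not proving it.

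What is missing is the paper's key maneuver, which takes place in the four-punctured sphere $F_{X,Y} = F - (X \cup Y)$ rather than in $F_A$. First, the paper shows that each rectangle, viewed in $F_{X,Y}$, is a band joining the two \emph{distinct} copies of $\bdd X$ (resp.\ of $\bdd Y$); this is the step that uses the coexistence of both $\bdd X$-rectangles and $\bdd Y$-rectangles, not (as you suggest) a ``reserve'' role for the case $\bdd E \cap \bdd X = \emptyset$. Second, it produces a curve or arc of $\bdd E$ lying in $F_{X,Y}$ that \emph{separates} the two copies of $\bdd X$ from each other: if $E$ is disjoint from $X \cup Y$ (and is not $X$ or $Y$, the immediate case), this is $\bdd E$ itself, since otherwise $E$ would be separating or parallel to $X$ or $Y$; if $E$ meets $X \cup Y$, it is the arc $\alpha$ in which an outermost subdisk of $E$ meets $F_{X,Y}$, which has both ends on a single copy of (say) $\bdd Y$ and separates the copies of $\bdd X$. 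Such a separating curve or arc is disjoint from $\bdd X \cup \bdd Y$, so it is forced to cross every band joining the two copies of $\bdd X$, and it can only cross by running from one $\Gamma$-side of the band to the other; only at this point does your parallelism observation apply, placing the denominators of both $\bdd X$-rectangles (or, when $\alpha$ has its ends on a copy of $\bdd X$, of both $\bdd Y$-rectangles) into $denom_A(\bdd E)$ and yielding the high denominator condition. This separation argument is the engine of the proof and is absent from your proposal; minimal position and innermost/outermost reductions alone cannot supply it.
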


\begin{proof}   Consider a rectangle $R_X$ in $F_A$ cut off from $F_A$ by two parallel arcs of $\bdd X$.  Since $R_X$ is disjoint from $\bdd Y$ the interior of the rectangle lies in the $4$-punctured sphere $F_{X, Y} = F - (X \cup Y)$, appearing there as cut off by a pair of parallel arcs of $\Gamma \cap F_{X, Y}$.  The ends of these arcs lie in either the same or separate copies of $\bdd X$ in $\bdd F_{X, Y}$.  The same could be said for a rectangle $R_Y$ in $F_A$ cut off by parallel arcs of $\bdd Y$, and the only way that both can be true is if, in both cases, the arcs of $\Gamma \cap F_{X, Y}$ connect one copy of $\bdd X$ (resp $\bdd Y$) to the other copy of $\bdd X$ (resp $\bdd Y$).    

If $E = X$ or $Y$ the proposition is immediate.  If $E$ is disjoint from both $X$ and $Y$ then, since $E$ is non-separating, $\bdd E$ must pass through both rectangles $R_X$ and $R_Y$; a subarc of $\bdd E$ that passes through $R_X$ will appear in $F_A$ as an arc parallel to the sides of $R_X$ in $F_A$ and so will have the same denominator.  Thus a denominator for the rectangle $R_X$ in $F_A$ appears also as denominator for $\bdd E$.  This shows that $\bdd E$ satisfies the high-denominator condition.

The picture is only slightly different if $E$ is not disjoint from $X \cup Y$.  An outermost disk of $E$ cut off by an arc of $(X \cup Y) \cap E$ will intersect $F_{X, Y}$ in an arc $\alpha$ with both ends at the same copy of $Y$, say, and will separate one copy of $\bdd X$ in $\bdd F_{X, Y}$ from the other.  In particular, $\alpha$ will pass through each rectangle that runs between the two copies of $\bdd X$ in $\bdd  F_{X, Y}$.  Once again, the denominator for $R_X$ in $F_A$ appears also as a denominator for $\bdd E$. \end{proof}

\begin{thm}  \label{thm:main}  For a  genus $2$ Heegaard splitting $H \cup_F J$, suppose there is a complete pair of meridian disks $\{ A, B \}$ in $H$, separated by a meridian disk $\Gamma \subset H$, and there is a complete pair of meridian disks $\{ X, Y \}$ in $J$, so that the pair $\bdd X, \bdd Y$ satisfies the high denominator rectangle condition in $F$.   Then the Heegaard splitting has Hempel distance at least $3$.
\end{thm}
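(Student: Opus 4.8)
The plan is to work in the curve complex $\mathcal{C}(F)$, where the Hempel distance of $H \cup_F J$ is the length of a shortest edge-path from the set of curves bounding meridian disks in $H$ to the set of curves bounding meridian disks in $J$. To prove the distance is at least $3$ it suffices to establish the single claim: \emph{if $C$ is an essential simple closed curve in $F$ disjoint from the boundary of some meridian disk of $J$, then $C$ has nonzero geometric intersection with the boundary of every meridian disk of $H$, and in particular $C$ is not isotopic to any such boundary.} Granting this, a path $c_0, c_1, c_2$ of length $\le 2$ joining an $H$-meridian $c_0$ to a $J$-meridian $c_2$ is impossible: $c_1$ is disjoint from the $J$-meridian $c_2$, so by the claim $c_1$ must meet $c_0$, contradicting the adjacency (hence disjointness) of $c_0$ and $c_1$. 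Paths of length $0$ or $1$ are excluded the same way, since the claim forbids any $H$-meridian from being disjoint from a $J$-meridian.

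I would first reduce the claim to the case of a \emph{non-separating} meridian of $J$. By hypothesis $\partial X, \partial Y$ satisfies the high denominator rectangle condition with respect to both $A$ and $B$, so two applications of Proposition \ref{prop:dhmeansh} show that every non-separating meridian disk $E$ of $J$ has $\partial E$ of high denominators with respect to $A$ and with respect to $B$. If the $J$-meridian occurring in the claim is instead separating, its boundary cuts $F$ into two once-punctured tori and is disjoint from a non-separating $J$-meridian lying in each of them (a meridian of each of the two solid tori into which the separating disk cuts $J$). Since a curve $C$ disjoint from the separating meridian lies in the closure of one of these two pieces, it is disjoint from the non-separating meridian on the other side. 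Thus in every case I may assume $C$ is disjoint from $\partial E$ for a non-separating $E$ with $\partial E$ of high denominators with respect to both $A$ and $B$.

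Now Proposition \ref{prop:hmeansself}, applied with respect to $A$ and then to $B$, shows that every component of $C \cap F_A$ meets $\partial A$ at least twice and every component of $C \cap F_B$ meets $\partial B$ at least twice. If $C$ crosses $\Gamma$, then $C \cap F_A$ and $C \cap F_B$ are nonempty families of arcs, so the hypothesis of Lemma \ref{lemma:bergeclaim1} is met and $C$ intersects every meridian disk of $H$ essentially; this also shows $C$ is not isotopic to an $H$-meridian, since bergeclaim1 would otherwise force that meridian to meet itself. The claim then follows. It therefore remains only to rule out the possibility that $C$ is disjoint from $\Gamma$, that is, that $C$ is isotopic to $\Gamma$ or is contained in $F_A$ or in $F_B$. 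This is the main obstacle of the argument.

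To dispose of it I would argue as follows. The curve $\Gamma$ itself is excluded because $\partial E$, having high denominators, crosses $\Gamma$ and so cannot be disjoint from it. To exclude $C \subset F_A$ (and symmetrically $C \subset F_B$) I would exploit that $\partial E$ realizes two \emph{distinct} denominators $r$ and $s$ with $2 \le r \le s-2$ among its arcs in $F_A$; arcs of different denominators have different slopes $\rho_A$. An essential, non-peripheral simple closed curve in the once-punctured torus $F_A$ that is disjoint from a given essential arc must be the core of the annulus complementary to that arc, and hence carries that arc's slope. A single curve $C$ cannot carry both the slope of the denominator-$r$ arcs and that of the denominator-$s$ arcs, so $C$ cannot be disjoint from all of $\partial E \cap F_A$, contradicting $C \cap \partial E = \emptyset$. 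Hence $C$ must cross $\Gamma$, and the claim, and with it the theorem, follows.
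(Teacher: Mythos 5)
Your proposal is correct and follows essentially the same route as the paper: reduce to a non-separating $J$-meridian $E$, use Proposition \ref{prop:dhmeansh} to give $\partial E$ high denominators with respect to both $A$ and $B$, then combine Proposition \ref{prop:hmeansself} with Lemma \ref{lemma:bergeclaim1} to show any curve disjoint from $E$ meets every meridian of $H$ (the paper phrases this as failure of the disjoint curve property rather than directly as the nonexistence of short curve-complex paths, which is the same thing). Your explicit treatment of the case $C \cap \Gamma = \emptyset$ --- where Lemma \ref{lemma:bergeclaim1} has no arcs to act on, handled by the observation that a non-peripheral closed curve in $F_A$ disjoint from an essential arc must carry that arc's slope and so cannot avoid arcs of two distinct denominators --- is a point the paper's proof leaves implicit, and you fill it correctly.
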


\begin{proof}  We show that the splitting does not have the disjoint curve property \cite{Th}.  Let $C$ be any simple closed curve in $F$, and $E$ be any meridian disk of $J$ that is disjoint from $C$.  If $E$ is separating, then $C$ lies on one side of $E$ in $J$, so we can replace $E$ by disk in $J$ disjoint from $C$ that is also non-separating.  So, with no loss of generality, assume $E$ is non-separating.

By Proposition \ref{prop:dhmeansh} $\bdd E$ has high denominators with respect to both $A$ and $B$.  By Proposition \ref{prop:hmeansself}, each component of $C \cap F_A$ (resp $C \cap F_B$) intersects $\bdd A$  (resp $\bdd B$) at least twice.  It then follows from Lemma \ref{lemma:bergeclaim1} that any meridian $D$ of $H$ intersects $C$.
\end{proof}

\section{High denominator rectangles and SUMS} \label{sect:SUMS}

Suppose $H \cup_F J$ is a genus $2$ Heegaard splitting and $\{ A, B \}$ is a complete set of meridian disks for $H$.  In \cite{Be2} Berge calls the pair $\{ A, B \}$ a set of Strict Universal Minimizers (SUMS) for the Heegaard splitting if for any complete pair of meridian disks $\{ X, Y \}$ for $J$ and alternate complete pair of meridian disks $\{ A^*, B^* \}$ for $H$, $|(A \cup B) \cap (X \cap Y)| < |(A^* \cup B^*) \cap (X \cap Y)|$.   Here is his argument that if $J$ has any complete pair of meridian disks $\{ X, Y \}$ so that the pair $\bdd X, \bdd Y$ satisfies the high denominator rectangle condition with respect to both $A$ and $B$, then the pair $\{ A, B \}$ is a set of SUMS.

\begin{lemma} \label{lemma:altAB1}
Suppose $C$ is a collection of disjoint simple closed curves in $F$ that essentially intersects $\Gamma \subset F$ and every arc in $F_A \cap C$ (resp $F_B \cap C$) intersects $\bdd A$ (resp $\bdd B$) at least twice. Then for any  non-separating meridian disk $D$ of $H$ disjoint from $A$ and $B$, either $D$ is parallel to $A$ or $B$ or $|D \cap C| \geq |A \cap C|$ and $|D \cap C| \geq |B \cap C|$.

If in addition some arc in $F_A \cap C$ (resp $F_B \cap C$) intersects $\bdd A$ (resp $\bdd B$) more than twice,  either $D$ is parallel to $A$ or $B$ or the inequalities are strict: $|D \cap C| > |B \cap C|$ (resp $|D \cap C| > |A \cap C|$).
\end{lemma}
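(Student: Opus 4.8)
The plan is to reduce everything to the pairs of pants $P_A = F_A - A$ and $P_B = F_B - B$ and to count intersections arc by arc using the slope formula $|\frac pq,\frac rs| = |ps-qr|$ established above. First I would record the structure of $\bdd D$. Since $D$ is disjoint from $A$ and $B$, each component of $\bdd D \cap F_A$ is an essential arc of $P_A$ with both endpoints on $\Gamma$, and the only such arc up to isotopy is the one separating the two copies of $\bdd A$; in the language of slopes it has denominator $0$, i.e. $\rho_A = \frac10$, and likewise for the arcs of $\bdd D \cap F_B$. If $\bdd D$ is disjoint from $\Gamma$ then $\bdd D$ lies in a single $P_A$ or $P_B$, where every essential simple closed curve is parallel to $\bdd A$, $\bdd B$, or $\Gamma$; the first two make $D$ parallel to $A$ or $B$ and the last is excluded because $D$ is non-separating. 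So in the remaining case $m := \frac12|\bdd D \cap \Gamma| \geq 1$.

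Next I would set up the bookkeeping. Writing $2n = |C \cap \Gamma|$, the curve $C$ has exactly $n$ arcs in each of $F_A$ and $F_B$; let $q_1,\dots,q_n$ and $q_1',\dots,q_n'$ be their denominators, all $\geq 2$ by hypothesis. Since $\bdd A \subset F_A$ is disjoint from $\Gamma$ and meets the $i$-th arc of $C \cap F_A$ in exactly $q_i$ points, $|A \cap C| = \sum_i q_i =: S_A$, and symmetrically $|B \cap C| = \sum_i q_i' =: S_B$. Each of the $m$ slope-$\frac10$ arcs of $\bdd D$ in $F_A$ meets the $i$-th arc of $C$ in $|\frac10,\frac{p_i}{q_i}| - 1 = q_i - 1$ points (the two are never parallel since $q_i \neq 0$), and similarly in $F_B$. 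Summing over all arcs gives \[ |D \cap C| = m\Big(\sum_i (q_i-1)\Big) + m\Big(\sum_i (q_i'-1)\Big) = m(S_A - n) + m(S_B - n). \]

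The inequalities then follow from arithmetic. Since $m \geq 1$ and $S_B \geq 2n$, \[ |D \cap C| \geq (S_A - n) + (S_B - n) = S_A + (S_B - 2n) \geq S_A = |A \cap C|, \] and the symmetric estimate gives $|D \cap C| \geq |B \cap C|$. For the strict statement, if some arc of $C \cap F_A$ meets $\bdd A$ more than twice then some $q_i \geq 3$, so $S_A \geq 2n + 1$ and the display yields $|D \cap C| \geq S_B + (S_A - 2n) \geq S_B + 1 > |B \cap C|$; the case of an arc of $C \cap F_B$ is symmetric.

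I expect the main obstacle to be justifying that the geometric intersection number $|D \cap C|$ really equals the sum of the per-arc minimal counts, rather than merely dominating or being dominated by it. Concretely, I would isotope $\bdd D$ and $C$ to meet $\Gamma$ minimally and to be in minimal position within each of $P_A$ and $P_B$; the remaining work is to check that no bigon between $\bdd D$ and $C$ survives, since such a bigon would have to cross $\Gamma$ and could be traded for a reduction of the intersection inside one of the pants, contradicting minimality there. One must also verify that after tightening there are no inessential ($\Gamma$-parallel) arcs of $\bdd D$, so that all $2m$ of its arc-ends genuinely contribute. Granting this normal-form statement, the slope count and the arithmetic above are routine, and the parallel-to-$A$-or-$B$ alternative is exactly the degenerate case $m = 0$.
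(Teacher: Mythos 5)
Your proof is correct, and its skeleton is the same as the paper's: handle the case $\bdd D \cap \Gamma = \emptyset$ by the pair-of-pants classification of closed curves (exactly as you do), then cut along $\Gamma$, observe that every arc of $\bdd D$ in $F_A$ (resp.\ $F_B$) is forced to be the unique essential arc disjoint from $\bdd A$ (resp.\ $\bdd B$), and bound $|\bdd D \cap C|$ arc by arc. The difference is in how the count is justified. The paper does not invoke the slope/determinant formula here; it works directly in the pairs of pants, setting $p$ equal to the number of arcs of $C \cap P_A$ joining the two copies of $\bdd A$ and $q$ equal to the number joining $\bdd A_+$ to $\Gamma$, so that $|A \cap C| = p+q$ and $|\Gamma \cap C| = 2q$; each arc of $\bdd D \cap P_A$ separates the two copies of $\bdd A$ and so crosses all $p$ arcs, while each arc of $\bdd D \cap P_B$ crosses each of the $q$ arcs of $C \cap F_B$, giving $|D \cap C| \geq p + q$. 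In your notation $p = S_A - n$ and $q = n$, so your intermediate bound $m(S_A - n) + m(S_B - n)$ is in fact slightly stronger (you use $S_B - n \geq n$ where the paper settles for $n$), and your crossed strict inequalities agree with the paper's. One clarification on the obstacle you flag: you do not need the displayed equality, only the lower bound, and minimal intersection numbers of isotopy classes of arcs bound \emph{any} representatives from below; so the only genuine position issue is ensuring $\bdd D$ has no $\Gamma$-parallel arcs, i.e.\ removing bigons between $\bdd D$ and $\Gamma$ (not between $\bdd D$ and $C$) without increasing $|\bdd D \cap C|$, which is possible precisely because $C$ meets $\Gamma$ essentially. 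The paper finesses this by choosing $D$ to minimize $|D \cap C|$ among all admissible disks, and is otherwise just as brief about general position as you are.
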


\begin{proof} Choose $D$ to be a non-separating meridian disk, disjoint from $A$ and $B$, not parallel to $A$ or $B$, so that among all such disks $|D \cap C|$ is minimal.  

$\bdd D$ is non-separating so it can't be parallel to $\Gamma$.  This implies that $\bdd D$ intersects both pairs of pants $P_A$ and $P_B$ in essential arcs, each with both ends on $\Gamma$.  Let $\bdd A_+, \bdd A_-$ be the two copies of $\bdd A$ in $\bdd P_A$.  Let $p$ be the number of arcs of $C \cap P_A$ that have an end on each of $\bdd A_{\pm}$ and $q$ be the number of arcs of $C \cap P_A$ that have one end on $\bdd A_+$ and one end on $\Gamma$.  Then $|A \cap C| = |\bdd A_+ \cap C| = p + q$ and, counting also the arcs running from $\bdd A_-$ to $\Gamma$, $\Gamma \cap C = 2q$.  The hypothesis guarantees that each arc of $\bdd D \cap P_B$ intersects each arc of $C \cap F_B$ so $\bdd D$ intersects $C \cap F_B$ at least $q$ times.  Similarly, each arc of $\bdd D \cap P_A$ intersects $C$ at least $p$ times.  Hence $|D \cap C| \geq p + q = |A \cap C|$.  A symmetric argument shows $|D \cap C| \geq |B \cap C|$.  

Continuing in the same vein, if in addition some arc in $F_B \cap C$ intersects $\bdd B$ more than twice then $\bdd D$ will intersect that arc more than once, so $\bdd D$ will intersect $C \cap F_B$ more than $q$ times and the inequality becomes strict:  $|D \cap C| > p + q = |A \cap C|$. 

\end{proof}

\begin{lemma} \label{lemma:altAB2}
Suppose $C$ is a collection of disjoint simple closed curves in $F$ that essentially intersects $\Gamma \subset F$ and every arc in $F_A \cap C$ (resp $F_B \cap C$) intersects $\bdd A$ (resp $\bdd B$) at least twice. Then for any  non-separating meridian disk $D$ of $H$ disjoint from $A$ and not parallel to $A$ or $B$, $|D \cap C| \geq |A \cap C|$ and $|D \cap C| \geq |B \cap C|$.

If in addition some arc in $F_A \cap C$ (resp $F_B \cap C$) intersects $\bdd A$ (resp $\bdd B$) more than twice,  either $D$ is parallel to $A$ or $B$ or the inequalities are strict: $|D \cap C| > |B \cap C|$ (resp $|D \cap C| > |A \cap C|$).
\end{lemma}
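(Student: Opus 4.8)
My plan is to follow the counting argument of Lemma~\ref{lemma:altAB1} almost verbatim, the one new feature being that $\bdd D$ is now permitted to meet $\bdd B$. As there, I first isotope $D$ to meet $A$, $B$, $\Gamma$, and $C$ minimally. Since $\bdd D \cap \bdd A = \emptyset$, the curve $\bdd D$ lies in $F_A \cup_\Gamma F_B$ missing $\bdd A$; because $D$ is non-separating, $\bdd D$ is not parallel to $\Gamma$, so $\bdd D$ crosses $\Gamma$ and meets each of $P_A$ and $P_B$ in essential arcs. On the $A$-side nothing changes: every arc of $\bdd D \cap P_A$ has both endpoints on $\Gamma$ and separates $\bdd A_+$ from $\bdd A_-$ in the pair of pants $P_A$, so it crosses each of the $p$ arcs of $C \cap P_A$ running from $\bdd A_+$ to $\bdd A_-$. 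Exactly as in Lemma~\ref{lemma:altAB1} this supplies a contribution of at least $p$ to $|D \cap C|$ (used for the bound $\ge |A \cap C|$) and, via the intersection formula $|\rho_A(\cdot),\rho_A(\cdot)|-1$ applied against the $q$ arcs of $C \cap F_A$, a contribution of at least $q$ (used for the bound $\ge |B \cap C|$), strictly larger as soon as some arc of $C \cap F_A$ meets $\bdd A$ more than twice.

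The real work is the $B$-side, where an arc of $\bdd D \cap P_B$ may now terminate on $\bdd B_+$ or $\bdd B_-$ rather than on $\Gamma$. To control this I would push $D$ into the solid torus $V = H \setminus A$: since $\bdd D \cap \bdd A = \emptyset$, surgering $D$ along innermost circles of $D \cap A$ moves $D$ into $V$ while fixing $\bdd D$, hence fixing $|D \cap C| = |\bdd D \cap C|$. In $V$ the disk $B$ is a meridian, and $D$, being essential in $H$ but not parallel to $A$, must also be a meridian of $V$ (were $\bdd D$ inessential on $\bdd V$ then, as $D$ is non-separating in $H$, $\bdd D$ would be isotopic to $\bdd A$ in $F$ and $D$ parallel to $A$). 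Thus $\bdd D$ and $\bdd B$ are isotopic on $\bdd V$, which is exactly the constraint needed to pin down the $\rho_B$-slopes presented by the arcs of $\bdd D \cap F_B$: they are the slopes a meridian of $V$ carries in $F_B$, compatible with---but, since $D$ is not parallel to $B$, not equal to---the high-denominator slopes of the arcs of $C \cap F_B$. Feeding these into $|\rho_B(\cdot),\rho_B(\cdot)|-1$ shows, just as on the $A$-side, that the arcs of $\bdd D$ in $F_B$ cross the $q$ arcs of $C \cap F_B$ and the $p_B$ arcs running $\bdd B_+$ to $\bdd B_-$ often enough to supply the missing $q$ (resp.\ $p_B$). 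Combining the two sides yields $|D \cap C| \ge p + q = |A \cap C|$ and $|D \cap C| \ge p_B + q = |B \cap C|$, with the strict forms as stated, and in fact shows the hypotheses reduce to those of Lemma~\ref{lemma:altAB1}.

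The step I expect to be the main obstacle is precisely this $B$-side slope bookkeeping: verifying that an arc of $\bdd D$ which doubles back across $\bdd B$ cannot run parallel to, and so miss, one of the high-denominator arcs of $C \cap F_B$. This is where the hypothesis that $D$ is a genuine meridian disjoint from $A$ and parallel to neither $A$ nor $B$---equivalently, a meridian of $V$ that is not isotopic to $B$ in $F$---must be used essentially; absent it, an arc of $\bdd D$ could align in slope with an arc of $C$ in $F_B$ and the $B$-side count would collapse. The fallback, if the slope argument proves awkward, is to make the reduction to Lemma~\ref{lemma:altAB1} explicit, isotoping $D$ across $A$ inside $H$ to a disk disjoint from both $A$ and $B$ in the same $H$-isotopy class (hence with the same $|D \cap C|$), and quoting the strict clause of that lemma directly.
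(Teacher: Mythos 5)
Your $A$-side counting is fine, and your instinct that the statement should reduce to Lemma~\ref{lemma:altAB1} is the right one, but both of your mechanisms for the $B$-side fail. The ``slope pinning'' step is a genuine gap: knowing that $D$ is a meridian of the solid torus $V$ obtained by cutting $H$ along $A$ (equivalently, that $\bdd D$ becomes isotopic to $\bdd B$ in $\bdd V$ once the two scars of $A$ are capped by disks) puts essentially no constraint on the $\rho_B$-slopes of the individual arcs of $\bdd D \cap F_B$, because the isotopy in $\bdd V$ is free to sweep across those two capping disks. Concretely, push a finger of $\bdd B$ across one copy of $\bdd A$ in $\bdd V$ and bring it back into $F_B$ running alongside an arc of $C \cap F_B$: the resulting curve still bounds a meridian disk of $H$ disjoint from $A$, is generically parallel to neither $A$ nor $B$, crosses $\bdd B$, and has arcs in $F_B$ parallel to arcs of $C \cap F_B$ --- arcs which therefore contribute nothing to $|\bdd D \cap C|$. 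So the per-arc crossing estimate you want on the $B$-side is simply false; no application of the formula $|\rho_B(\cdot),\rho_B(\cdot)|-1$ can force crossings arc by arc. Your fallback is worse: an isotopy of $D$ in $H$ moves $\bdd D$ in $F$ and does \emph{not} preserve $|D \cap C| = |\bdd D \cap C|$. Indeed \emph{every} non-separating meridian of $H$ disjoint from $A$ and not parallel to $A$ is isotopic in $H$ to $B$ (both are meridian disks of the solid torus $V$, and any two meridian disks of a solid torus are isotopic), so ``same $H$-isotopy class, hence same $|D \cap C|$'' would give $|D \cap C| = |B \cap C|$ for all such $D$, which is absurd. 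The lemma is a statement about where $\bdd D$ sits in $F$ relative to $C$, and the $H$-isotopy class sees none of that.

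The paper gets around exactly this obstacle by an extremal argument rather than direct counting. Choose $D$ minimizing $|D \cap C|$ among all admissible disks (non-separating, disjoint from $A$, not parallel to $A$ or $B$), and among those choose one minimizing $|D \cap B|$. If this $D$ still meets $B$, an outermost disk of $D$ cut off by $B$ meets $F - (A \cup B)$ in an arc with both ends on one copy of $\bdd B$; that arc crosses $P_A$ in essential arcs, each meeting all $p$ arcs of $C \cap P_A$ running from $\bdd A_+$ to $\bdd A_-$. Band-summing $D$ to $A$ along a band parallel to those $p$ arcs produces an admissible disk $D'$ with $|D' \cap B| < |D \cap B|$ and $|D' \cap C| \leq |D \cap C| + |A \cap C| - 2p = |D \cap C| - (p-q) \leq |D \cap C|$, since the hypothesis forces $p \geq q$. (One also checks $D'$ is non-separating and parallel to neither $A$ nor $B$.) This contradicts the double minimality, so the minimizer is in fact disjoint from $B$; Lemma~\ref{lemma:altAB1} applies to it, and the inequalities for an arbitrary admissible $D$ follow from minimality. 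That band-sum/minimality mechanism is the missing idea in your write-up.
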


\begin{proof} Choose $D$ to be a non-separating meridian disk, disjoint from $A$ and not parallel to $A$ or $B$, so that among all such disks $|D \cap C|$ is minimal.  Then choose, among all such disks, one whose intersection 
with $B$ is minimal.

Following Lemma \ref{lemma:altAB1} we need only consider the case in which $\bdd D$ intersects $B$; we will show that this is impossible.  An outermost disk $E$ of $D$ cut off by $B$ necessarily intersects $F$ in an arc of $F 
- (A \cup B)$ with  both ends at a copy of $\bdd B$ in $\bdd F  - (A \cup B)$. It follows as above that the 
arc intersects $P_A$ in one or more essential arcs with both ends at $\Gamma$. In 
particular it crosses all $p$ arcs of $C \cap P_A$ that have an end on each of $\bdd A_{\pm}$. 
Let $D'$ be the meridian disk for $H$ obtained by band-summing $D$ to $A$ 
via a band parallel to the $p$ arcs.  This can be done so that the new arc of $D' - B$ is parallel in $F$ to a subarc of $\bdd B$ with the same ends, so $|D' \cap B| < |D \cap B|$.  We also have
$$|D' \cap C | \leq |D \cap C | +|A \cap C | -2p = 
|D \cap C | - (p - q).$$ Now the hypothesis guarantees that $C$ intersects 
$A$ at least as often as $\Gamma$ does, so $p + q 
\geq 2q \implies p - q \geq 0$. Hence 
$|D' \cap C | \leq |D \cap C |$. In a genus two surface, the band sum of 
two non-separating curves cannot be separating unless the original two 
curves are parallel, so it follows that $D'$ is non-separating. Since one 
of the band-summands is $A$, $D'$ is not parallel to $A$. If $D'$ were parallel 
to $B$ then, dually, $D$ could be obtained by band-summing $A$ to $B$ and 
so would have been disjoint from $B$. We conclude that $D'$ satisÞes the 
hypothesis of the theorem, yet $|D' \cap C| \leq |D \cap C|$ and $|D' \cap B| < |D \cap B|$.  This contradicts our choice of $D$.
\end{proof}

\begin{prop}[Be2, Lemma 4.4]  \label{prop:Lemma4_4} Suppose $C$ is a collection of disjoint simple closed curves in $F$ that essentially intersects $\Gamma$ and every arc in $F_A \cap C$ (resp $F_B \cap C$) intersects $\bdd A$ (resp $\bdd B$) at least twice. (That is, every such arc has denominator at least $2$.)  Then for any  complete set of meridian disks $A^*, B^*$ not parallel to $A, B$, $|(A^* \cup B^*) \cap C| \geq |(A \cup B) \cap C|$.

If in addition the denominator of some arc of $F_A \cap C$ and some arc of $F_B \cap C$ are each at least $3$, then for any  complete set of meridian disks $A^*, B^*$ not parallel to $A, B$, $|(A^* \cup B^*) \cap C| > |(A \cup B) \cap C|$.
\end{prop}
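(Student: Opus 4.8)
The plan is to deduce the proposition from the single--disk estimate of Lemma~\ref{lemma:altAB2}. That lemma, together with its mirror image under the interchange of $A$ and $B$, gives the following statement, which I abbreviate $(\ast)$: if $D$ is a non-separating meridian disk of $H$ disjoint from $A$ (respectively from $B$) and not parallel to $A$ or $B$, then $|D\cap C|\ge\max\{|A\cap C|,|B\cap C|\}$; and if in addition $C$ has an arc of denominator at least $3$ in each of $F_A$ and $F_B$, then the inequality is strict, $|D\cap C|>\max\{|A\cap C|,|B\cap C|\}$. The entire force of the proposition is thus to arrange that each of the two disks of an alternate complete system meets \emph{at most one} of $A,B$: once each of $A^*,B^*$ is known to avoid one of $A,B$, two applications of $(\ast)$ and addition give $|(A^*\cup B^*)\cap C|\ge |A\cap C|+|B\cap C|$, with strictness inherited from the strict half of $(\ast)$.

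Rather than modify the given system, I would argue by minimization, exactly as in the proof of Lemma~\ref{lemma:altAB2}. Among all complete meridian systems $\{A^*,B^*\}$ of $H$ that are not parallel to $\{A,B\}$, choose one for which $|(A^*\cup B^*)\cap C|$ is minimal, and, among those, one for which $|(A^*\cup B^*)\cap(A\cup B)|$ is minimal. It suffices to prove the bound for this minimizer, since the given system meets $C$ at least as often. If one disk of the minimizer is parallel to $A$ or $B$, it coincides with that disk and the other disk is disjoint from it and not parallel to $A$ or $B$, so $(\ast)$ applies to the other disk and finishes the case. Otherwise neither disk is parallel to $A$ or $B$, and I claim neither meets both $A$ and $B$.

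To prove the claim I would suppose, say, that $A^*$ meets both $A$ and $B$ and derive a contradiction. Pick an arc of $(A^*\cup B^*)\cap B$ that is outermost on $B$, cutting off a subdisk $E\subset B$ whose interior misses $A^*\cup B^*$; the arc $A^*\cap E$ is properly embedded in $A^*$ and cuts it into two pieces, and gluing $E$ to one of these pieces produces a new disk with strictly fewer intersections with $A\cup B$ (no new intersections with $A$ are created, since $E\subset B$ is disjoint from $A$). The intersection with $C$ is controlled by precisely the count in Lemma~\ref{lemma:altAB2}: because every arc of $C$ in $F_A$ and $F_B$ has denominator at least $2$, $C$ meets each of $A$ and $B$ at least as often as it meets $\Gamma$ (the inequality $p\ge q$ in that proof), and this forces the surgery not to increase $|(A^*\cup B^*)\cap C|$. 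The resulting system therefore has no larger intersection with $C$ and strictly smaller intersection with $A\cup B$; provided it is still a complete system not parallel to $\{A,B\}$, this contradicts the choice of minimizer (and if the surgery should happen to produce $\{A,B\}$ itself, the pre-surgery system already realizes the bound, so there is nothing to prove). Applying $(\ast)$ to the minimizer then completes both the non-strict and, using the denominator-$3$ hypothesis, the strict assertion, since each disk of the minimizer contributes more than $\max\{|A\cap C|,|B\cap C|\}$.

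The hard part will be the surgery step, and specifically the verification that cutting $A^*$ along an arc of an \emph{external} disk $E\subset B$ and regluing yields once more a \emph{complete} system that is not parallel to $\{A,B\}$. The control of $|(A^*\cup B^*)\cap C|$ is free, being the same computation that already underlies Lemma~\ref{lemma:altAB2}; what needs care is that this guided handle slide respects completeness at every stage, so that the minimization takes place within the intended class of systems. I would handle this by tracking the effect of the move on $H$ cut along $A^*\cup B^*$ (a ball), checking that the new pair of disks again cuts $H$ into a ball, and by choosing the outermost subdisk on whichever of $A,B$ makes the non-separating alternative available.
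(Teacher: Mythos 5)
Your overall architecture --- the double minimization over complete systems, a surgery along an outermost disk to contradict minimality, and then Lemma \ref{lemma:altAB2} applied disk-by-disk --- is essentially the paper's, and your reduction (it suffices that each of $A^*, B^*$ be disjoint from at least one of $A, B$) would indeed finish the proof. The genuine gap is in the step you declare to be ``free'': the control of $|(A^*\cup B^*)\cap C|$ under the surgery. The inequality $p \geq q$ in the proof of Lemma \ref{lemma:altAB2} controls a very specific operation --- a band sum of a disk \emph{disjoint from} $A$ with $A$ itself, along a band parallel to the $p$ arcs of $C \cap P_A$ --- and says nothing about your operation, which is a compression of $A^*$ along an outermost subdisk $E \subset B$. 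Write $\bdd E = \alpha \cup \beta$ with $\alpha$ an arc of $A^* \cap B$ and $\beta \subset \bdd B$, and let $P_1, P_2$ be the two pieces of $A^*$ cut along $\alpha$. The two surgered disks $D_i = P_i \cup E$ satisfy
$$|\bdd D_1 \cap C| + |\bdd D_2 \cap C| \; = \; |\bdd A^* \cap C| + 2\,|\beta \cap C|,$$
so to conclude $|\bdd D_i \cap C| \leq |\bdd A^* \cap C|$ for some $i$ you must compare $|\beta \cap C|$ with the $C$-count of the complementary piece of $\bdd A^*$ --- and nothing in your argument bounds $|\beta \cap C|$. An outermost arc of $(A^*\cup B^*)\cap B$ can perfectly well cut off a disk whose $\bdd B$-side carries almost all of $C \cap \bdd B$, in which case \emph{both} $D_1$ and $D_2$ meet $C$ strictly more than $A^*$ does, and the minimality contradiction evaporates. (A symmetric-looking attempt to fix this by choosing the better of the two outermost disks in $B$ only gives $|\beta\cap C|\le |B\cap C|/2$, which is useless without already knowing $|B \cap C| \leq |A^* \cap C|$ --- circular.)

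What the paper supplies is exactly the missing mechanism, and it is two-sided in a way your sketch is not: $E$ is chosen, among the outermost disks of \emph{all four} disks $A^*, B^*, A, B$, to be one meeting $C$ least. With that choice, (i) $E$ cannot lie in $A^*$ or $B^*$: otherwise surgering $A$ (or $B$) along $E$ produces two non-parallel disks disjoint from $A \cup B$, each meeting $C$ no more than $A$ (or $B$) does, contradicting Lemma \ref{lemma:altAB1}; and (ii) once $E$ is known to lie in $A$ or $B$, the minimality of $|E \cap C|$ is precisely what bounds the quantity above, since each piece $P_i$ of $A^*$ contains an outermost disk of $A^*$, which by the choice of $E$ meets $C$ at least $|E \cap C|$ times; hence each surgered disk meets $C$ no more than $A^*$ does. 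Step (i) is not optional bookkeeping --- without it the minimizing outermost disk may lie in $A^*$ or $B^*$, where your surgery is unavailable --- and it is where Lemma \ref{lemma:altAB1}, which your sketch never invokes, enters the argument. By contrast, the step you flag as ``the hard part,'' completeness of the new system, is nearly automatic: in a genus two handlebody the band sum of a non-separating and a separating disk is parallel to the non-separating one, so both surgered disks are non-separating, and any two disjoint, non-parallel, non-separating disks form a complete system; the paper dispatches this in two sentences. So your proposal has the difficulty inverted, and as written the contradiction at its core does not go through.
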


\begin{proof} Choose a complete set of meridian disks $A^*, B^*$ for $H$ not parallel to $A, B$ so that among all such disks $|(A^* \cup B^*) \cap C|$ is minimal.  Then choose, among all such pairs of disks, a pair for which $|(A^* \cup B^*) \cap (A \cup B)|$ is minimal. 

If $A^* \cup B^*$ is disjoint from $A \cup B$ then the result follows from Lemma \ref{lemma:altAB2}, so suppose $A^* \cup B^*$ intersects $A \cup B$.  Since $A^*, B^*$ have been chosen to minimize the number of intersection components, all of the intersection components are arcs.  Of the collection of outermost disks cut off from any of the disks $A^*, B^*, A, B$ by these arcs of intersection, let $E$ be one that intersects $C$ least.  

We first show that $E$ cannot lie in $A^*$ or $B^*$.  Say $E \subset A^*$ is cut off by an arc of $A \cap A^*$.  Then $E$ can be used to cut $A$ into two non-parallel disks, each disjoint from $A \cup B$ and each intersecting $C$ no more than $A$ does.  This contradicts Lemma \ref{lemma:altAB1}.

We next show that $E$ cannot lie in $A$ or $B$.  Say $E \subset A$ is cut off by an arc of $A \cap A^*$. Then $E$ can be used to cut $A^*$ into two non-parallel disks, $A^*_1, A^*_2$, each disjoint from $A^* \cup B^*$, each intersecting $C$ no more than $A^*$ and each intersecting $A \cup B$ in fewer arcs than $A^*$ did.  In a genus two handlebody the band sum of a non-separating and a separating disk is parallel to the non-separating disk; it follows that $A^*_1, A^*_2$ are both non-separating. Since they are not parallel, at least one, say $A^*_1$, is not parallel to $B^*$.  Hence the pair $A^*_1, B^*$ is a complete collection of meridian disks.  The pair cannot be parallel to $A, B$ since $A^*$ is a band sum done in the complement of the two and any band-sum of non-separating disks, all done in the complement of $A$ and $B$ is disjoint from $A \cup B$.  Then the pair $A^*_1, B^*$ contradicts our choice of $A^*, B^*$.  
\end{proof} 

\begin{cor}  \label{cor:SUMS} Suppose $H \cup_F J$ is a Heegaard splitting and $J$ has a complete pair of meridian disks $X, Y$ so that the pair $\bdd X, \bdd Y$ satisfies the high denominator rectangle condition.   Then the pair $A, B$ is a set of SUMS for the splitting.
\end{cor}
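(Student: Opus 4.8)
The plan is to reduce the Corollary directly to the strict form of Proposition \ref{prop:Lemma4_4}, using Propositions \ref{prop:dhmeansh} and \ref{prop:hmeansself} to verify its hypotheses. Unwinding the definition of SUMS, I must show that for an \emph{arbitrary} complete pair $\{X', Y'\}$ of meridian disks for $J$ and an arbitrary alternate complete pair $\{A^*, B^*\}$ for $H$ not parallel to $\{A, B\}$, the curve system $C = \bdd X' \cup \bdd Y'$ satisfies $|(A \cup B) \cap C| < |(A^* \cup B^*) \cap C|$. The conceptual point is that the high denominator rectangle condition is assumed only for the single distinguished pair $\{X, Y\}$, whereas SUMS is a statement about every pair; Proposition \ref{prop:dhmeansh} is exactly the bridge that propagates the high-denominator conclusion from $\{X, Y\}$ to all meridian disks of $J$.

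First I would note that each disk of a complete pair for a genus $2$ handlebody is non-separating, so $X'$ and $Y'$ are both non-separating meridian disks for $J$. Since $\{X, Y\}$ satisfies the rectangle condition with respect to both $A$ and $B$ (Definition \ref{defin:dhdenom}), applying Proposition \ref{prop:dhmeansh} to each of $X', Y'$ and with respect to each of $A, B$ shows that each of $\bdd X'$ and $\bdd Y'$ has high denominators with respect to both $A$ and $B$. In particular each of these curves essentially intersects $\Gamma$, so $C$ does as well, and each contributes an arc of denominator at least $4$ in both $F_A$ and $F_B$.

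Next I would verify the uniform denominator hypothesis of Proposition \ref{prop:Lemma4_4}, namely that every arc of $C \cap F_A$ (and of $C \cap F_B$) meets $\bdd A$ (resp.\ $\bdd B$) at least twice. Here is where the disjointness of $\bdd X'$ and $\bdd Y'$ is used: since $\bdd X'$ has high denominators with respect to $A$ and $\bdd Y'$ is a simple closed curve disjoint from it, Proposition \ref{prop:hmeansself} forces every arc of $\bdd Y' \cap F_A$ to meet $\bdd A$ at least twice; symmetrically, using that $\bdd Y'$ has high denominators, every arc of $\bdd X' \cap F_A$ meets $\bdd A$ at least twice. The same argument with $B$ in place of $A$ handles $F_B$. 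Thus every arc of $C \cap F_A$ (resp.\ $C \cap F_B$) has denominator at least $2$, even though ``high denominators'' by itself only asserts the \emph{existence} of one large-denominator arc.

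With these hypotheses in hand the conclusion is immediate: $C$ essentially intersects $\Gamma$, all its arcs in $F_A$ and $F_B$ have denominator at least $2$, and there are arcs of denominator at least $3$ in each of $F_A$ and $F_B$. The strict form of Proposition \ref{prop:Lemma4_4} then yields $|(A^* \cup B^*) \cap C| > |(A \cup B) \cap C|$ for every alternate complete pair $\{A^*, B^*\}$, which is precisely the SUMS inequality; and since $\{X', Y'\}$ was arbitrary, $\{A, B\}$ is a set of SUMS. The step requiring the most care is the second one, where I must make sure Proposition \ref{prop:dhmeansh} is being invoked for \emph{every} non-separating meridian disk of $J$ and not merely for the distinguished pair satisfying the rectangle condition, since it is exactly this universality that upgrades a hypothesis about one pair into the ``Universal'' in SUMS.
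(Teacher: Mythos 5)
Your proof is correct and takes essentially the same route as the paper: apply Proposition \ref{prop:dhmeansh} to an arbitrary complete pair for $J$, then Proposition \ref{prop:hmeansself} to get that every arc in $F_A$ and $F_B$ has denominator at least $2$ (with some arc of denominator at least $4$), and finally the strict form of Proposition \ref{prop:Lemma4_4}. Your only deviation is a minor refinement: you invoke Proposition \ref{prop:hmeansself} pairwise (using $\bdd X'$ against $\bdd Y'$ and vice versa) rather than applying it directly to the union, which handles slightly more carefully the fact that the proposition is stated for a curve disjoint from the given collection.
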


\begin{proof}  Let $X^*, Y^*$ be any complete pair of meridian disks for $J$.  Then according to Proposition \ref{prop:dhmeansh}, both $\bdd X^*$ and $ \bdd Y^*$ have high denominators with respect to both $A$ and $B$.  Following Proposition \ref{prop:hmeansself}, every component of $(\bdd X^* \cup \bdd Y^*) \cap F_A$ (resp $(\bdd X^* \cup \bdd Y^*) \cap F_B$ intersects $\bdd A$ (resp $\bdd B$) at least twice, and those arcs with highest denominators intersect $\bdd A$ (resp $\bdd B$) more than twice.  The result then follows from Proposition \ref{prop:Lemma4_4}.
\end{proof}

\section{Construction preliminaries: the standard triple} \label{sect:standtrip}

As in Section \ref{sect:criterion}, let $H$ be an oriented handlebody divided into solid tori $H_A, H_B$ by a separating meridian $\Gamma$, with meridians of the solid tori denoted  $A$ and $B$ respectively.  Suppose further we are given specified longitudes of the solid tori, denoted $\lambda_A \subset \bdd F_A$ and $\lambda_B \subset F_B$.  Given this datea, pictured in Figure \ref{fig:doubletorus} is a {\em standard triple} $\Gamma_A, \Gamma, \Gamma_B$ of separating meridians in $H$. The notation is chosen so that $\Gamma_A$ intersects $F_A$  in two meridians and is disjoint from $\lambda_B$ in $F_B$; symmetrically $\Gamma_B$ intersects $F_B$  in two meridians and is disjoint from $\lambda_A$ in $F_A$.

 A somewhat different view of a standard triple is given via Figure \ref{fig:standtrip}.  The boundary of $\Gamma_B$ is shown in blue; the $\mathbb{Z}_3$ symmetry given by $\frac{2\pi}{3}$ rotation about the vertical axis induces a cyclic permutation of the handles that carries $\Gamma_B$ first to $\Gamma_A$ and then to $\Gamma$.  It also carries $\lambda_A$ first to $\lambda_B$ and then to a third curve $\lambda_{AB}$ that is shown in red in Figure \ref{fig:standtrip}.  The curve $\lambda_{AB}$ intersects $\Gamma$ in two points and is disjoint from both $\Gamma_A$ and $\Gamma_B$.  

 \begin{figure}[ht!]
 \labellist
\small\hair 2pt
\pinlabel \color{red}{$\lambda_A$} at 90 145
\pinlabel \color{red}{$\lambda_B$} at 365 145
\pinlabel \color{blue}{$\Gamma_B$} at 45 145
\pinlabel \color{black}{$\Gamma_A$} at 400 145
\pinlabel $\Gamma$ at 220 270
\pinlabel $F_A$ at 70 270
\pinlabel $F_B$ at 375 270
  \endlabellist
    \centering
    \includegraphics[scale=0.5]{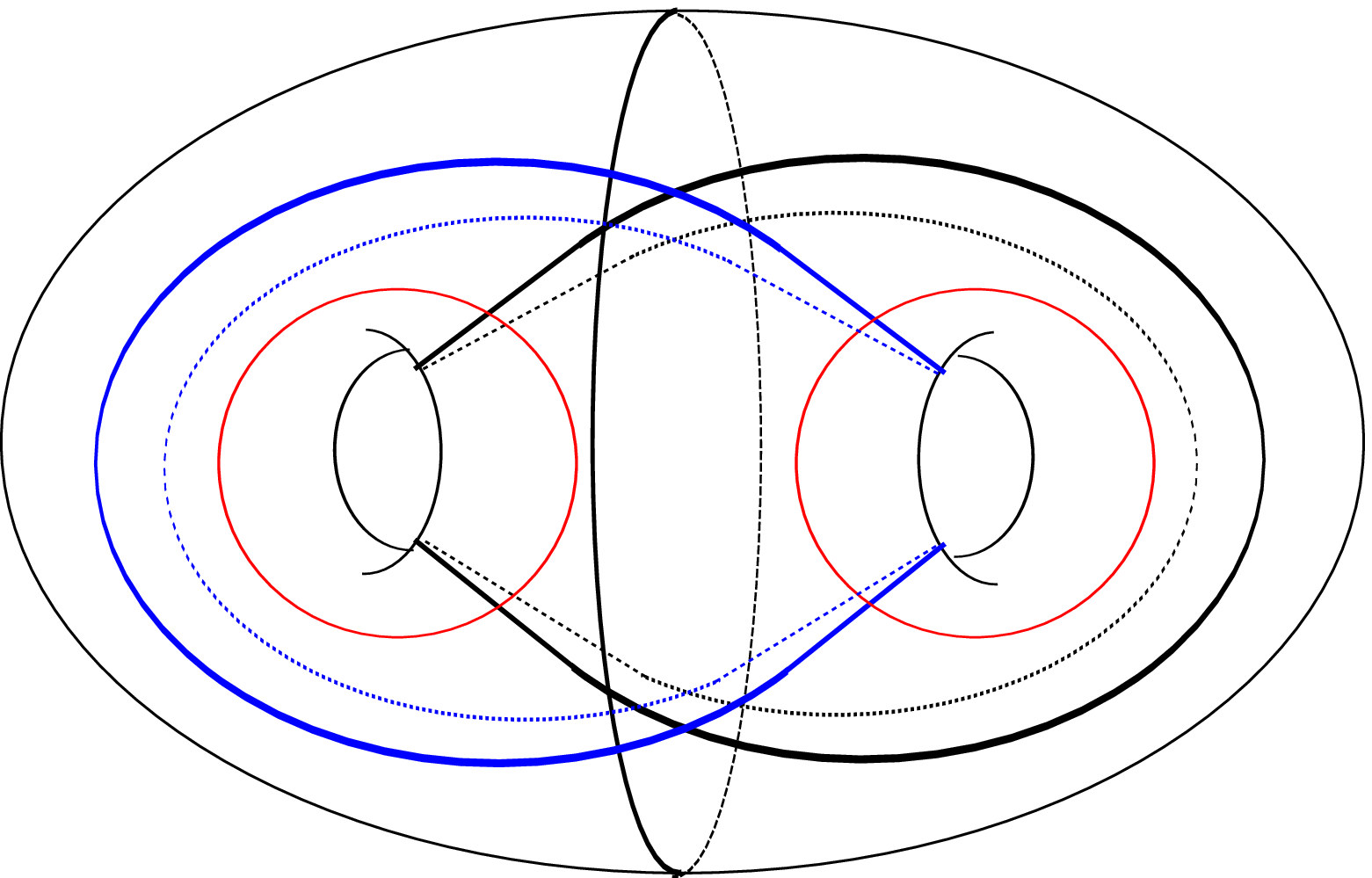}
    \caption{} \label{fig:doubletorus}
    \end{figure}
    
    \begin{figure}[ht!]
 \labellist
\small\hair 2pt

\pinlabel  \color{red}{$\lambda_{AB}$} at 145 165
\pinlabel \color{black}{$\Gamma_B$} at 55 260

  \endlabellist
    \centering
    \includegraphics[scale=0.7]{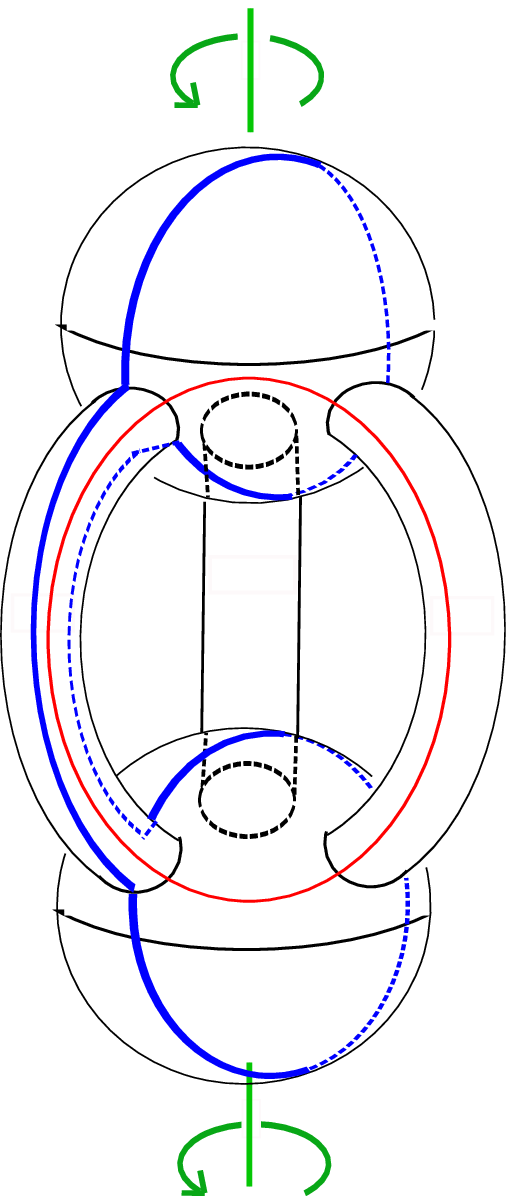}
\caption{} \label{fig:standtrip}
    \end{figure}

Suppose $D$ is a separating meridian disk for $H$, dividing $H$ into two solid tori $U$ and $V$.  Consider the automorphism $\tau_D: H \to H$ given by performing a left half-twist in a collar neighborhood of $D$ as one crosses from $U$ to $V$.  The automorphism is the identity on $U$ but, because of the half-twist, reverses the orientation of both the meridian and longitude of $V$.  If the labels $U$ and $V$ are reversed, the resulting automorphism differs from the original by the hyperelliptic involution.   In particular, for $c$ any simple closed curve in $F$ transverse to $\bdd D$, the image $\tau_D(c)$ is, up to isotopy, independent of which half is labeled $U$ and which $V$, so without ambiguity we denote $\tau_D(c)$ by $c^D$.  (But because the hyperelliptic involution may reverse the orientation of a curve, an orientation of the curve $c$ does not induce a natural orientation on the curve $c^D$.)

\begin{example}  \label{example:halftwist}
\rm{Figure \ref{fig:halftwist} shows the result of half-twisting $\bdd \Gamma_B$ along $\Gamma_A$ (simplified in the second drawing).   Note that the resulting curve $\bdd \Gamma_B^{\Gamma_A}$ intersects $F_A$ in two rectangles (just the front part of the rectangles are shown in Figure \ref{fig:rectangle}), with slopes $\infty$ and $-1$, and also intersects $F_B$ in two rectangles, with slopes $0$ and $+1$.}  
\end{example}

(Whether the slopes are $+1$ or $-1$ is relatively unimportant: the definitions in Section \ref{sect:digress} will be made highly symmetric, so here and elsewhere we can be relatively relaxed about whether specific slopes are positive or negative.)

    \begin{figure}[ht!]
 \labellist
\small\hair 2pt
\pinlabel \color{blue}{$\bdd \Gamma_B^{\Gamma_A}$} at 210 560
\pinlabel  \color{black}{${F_Q}$} at 155 250
\pinlabel ${F_P}$ at 155 215
 \endlabellist
    \centering
    \includegraphics[scale=0.5]{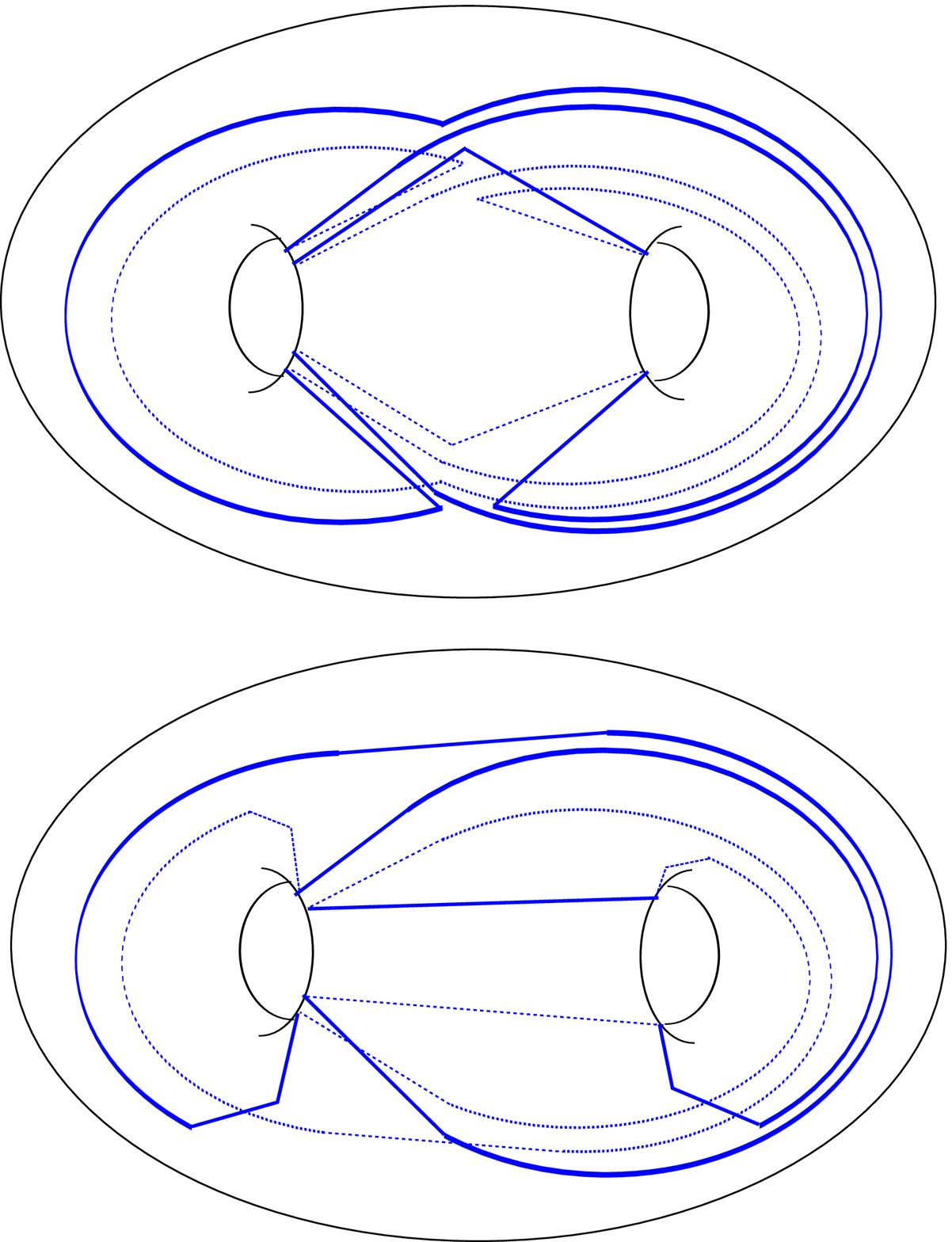}
    \caption{} \label{fig:halftwist}
    \end{figure}
    
        \begin{figure}[ht!]
 \labellist
\small\hair 2pt
  \endlabellist
    \centering
    \includegraphics[scale=0.5]{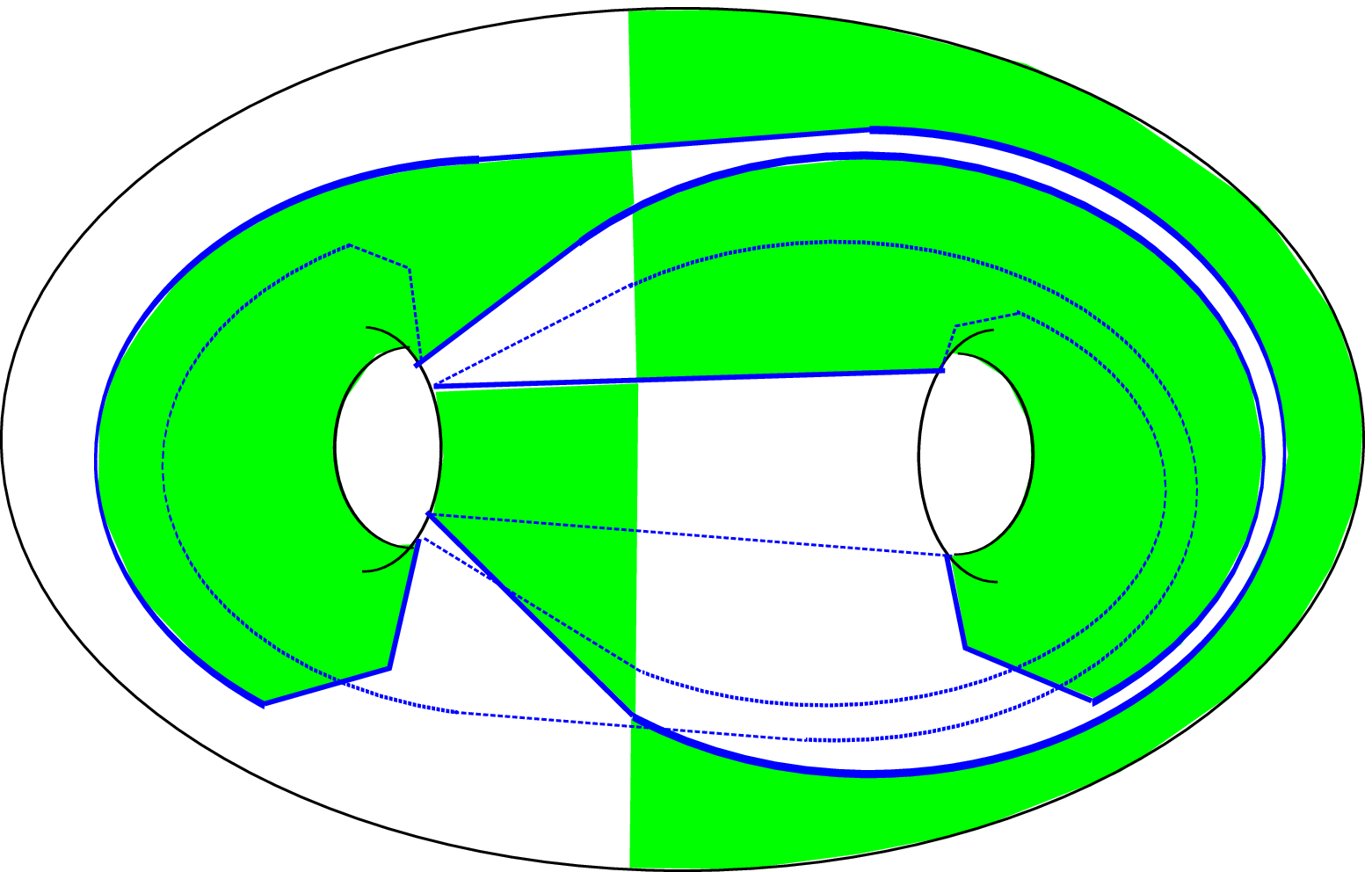}
    \caption{} \label{fig:rectangle}
    \end{figure}

\section{Digression on rationals and slopes} \label{sect:digress}

    \begin{figure}[ht!]
 \labellist
\small\hair 2pt
\pinlabel \color{blue}{$2/3^*$} at 170 240
\pinlabel \color{blue}{$-2/3^*$} at 170 0
\pinlabel \color{blue}{$3/2^*$} at 70 240
\pinlabel \color{blue}{$-3/2^*$} at 70 0
\pinlabel \color{blue}{$1/3^*$} at 250 170
\pinlabel \color{blue}{$-1/3^*$} at 250 75
\pinlabel \color{blue}{$3/1^*$} at -5 170
\pinlabel \color{blue}{$-3/1^*$} at -8 75
\pinlabel \color{black}{$1/1$} at 120 245
\pinlabel \color{black}{$-1/1$} at 120 -8
\pinlabel \color{black}{$1/2$} at 215 215
\pinlabel \color{black}{$-1/2$} at 225 35
\pinlabel \color{black}{$2/1$} at 20 215
\pinlabel \color{black}{$-2/1$} at 10 35
\pinlabel \color{black}{$0/1$} at 255 120
\pinlabel \color{black}{$1/0$} at -15 120
 \endlabellist
    \centering
    \includegraphics[scale=0.8]{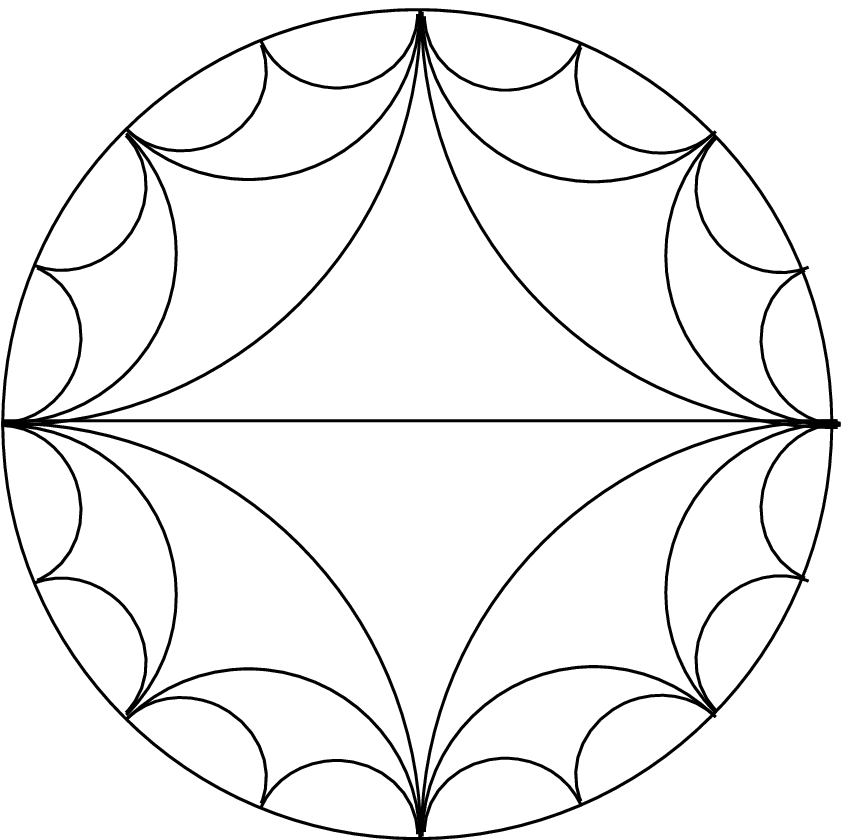}
    \caption{} \label{fig:Farey}
    \end{figure}
    
 \begin{defin} \label{defin:close}  The rationals $\infty, 0, \pm 1, \pm 2, \pm \frac 12$ (those that are black and unstarred in the part of the Farey tesselation shown in Figure \ref{fig:Farey}) are called {\em close} rationals; the close rationals together with $\pm 3, \pm \frac 13, \pm \frac 23, \frac 32$ (those that are blue and starred in Figure \ref{fig:Farey}) are called {\em nearby} rationals.
    \end{defin}

\begin{defin} \label{defin:distant} A rational $\frac pq$ is {\em distant} if $|p|, |q|, |p \pm q|, |p \pm  2q|$ and $|2p \pm q|$ are all $\geq 2$.  

A distant rational $\frac pq$ is {\em remote} if, in addition, $|p \pm 3q|, |3p \pm  2q|, |2p \pm  3q|$ and $|3p \pm q|$ are all $\geq 2$. 
\end{defin}

There are several equivalent ways of saying much the same thing; the first two motivate the terminology:
\begin{itemize}
\item  A rational is distant if and only if it is not adjacent in the Farey tesselation (Figure \ref{fig:Farey}) to a close rational.
\item  A rational is remote if and only if it is not adjacent in the Farey tesselation to a nearby rational.
\item The rational $\frac pq$ is remote if and only $\frac pq, \frac {p \pm q}q$ and $\frac p{q \pm p}$ are all distant.
\end{itemize}

Simply because there are more large numbers than small numbers, a ``random" rational will satisfy all these conditions and so will be remote. 

\bigskip

Let $W$ be a solid torus with a specified longitude and $c$ be a simple closed curve on $\bdd W$. 

\begin{defin} \label{defin:genericslope}  $c$ has {\em distant (resp. remote, close, nearby)} slope if  the rational that represents the slope of $c$ is distant (resp. remote, close, nearby).  
\end{defin}

Put another way, $c$ has distant slope if the number of intersection points of $c$ with each of the simple closed curves of slopes $0, \pm 1, \pm 2, \pm \frac 12, \infty$ is at least $2$.  The curve $c$ furthermore has remote slope if also the number of intersection points with each of the simple closed curves of slopes $\pm \frac 13, \pm 3, \pm \frac 23,  \pm \frac 32$ is at least $2$.   

Let $\bdd W_- \subset \bdd W$ be a punctured torus obtained from $\bdd W$ by removing the interior of a disk in the complement of $c$.  

\begin{prop} Suppose $c$ has remote slope, and $\alpha, \beta$ are disjoint properly embedded non-parallel arcs in $\bdd W_-$ with close slopes.   Then $|c \cdot \alpha|, |c \cdot \beta|$ and $||c \cdot \alpha|-|c \cdot \beta||$ are each at least $2$.
\end{prop}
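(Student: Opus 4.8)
The plan is to convert the entire statement into slope arithmetic on the Farey tessellation. Write $p/q$ for the slope $\sigma_c$ of $c$, and $a/b$, $c'/d$ for the (close) slopes of $\alpha$ and $\beta$. The first thing I would establish is the basic count $|c \cdot \alpha| = |\frac pq, \frac ab|$ (and likewise for $\beta$). Lifting $\bdd W_-$ to the universal abelian cover $\mathbb{R}^2 - \mathbb{Z}^2$, the arc $\alpha$ lifts to a straight segment running from one lattice point to an adjacent one along a line of slope $a/b$, while $c$ lifts to a family of parallel lines of slope $p/q$ that \emph{avoid} the lattice (this is exactly where the hypothesis that the removed disk lies in the complement of $c$ is used). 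Counting how many of these lines the segment crosses gives $|pb - qa| = |\frac pq, \frac ab|$. Note the contrast with the arc--arc count $|\cdot\,,\cdot| - 1$ recorded before Definition \ref{defin:denom}: there is no $-1$ here precisely because $c$ misses the puncture, so no crossing is lost at the endpoints of the segment.

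With this in hand the first two inequalities are immediate. Each close slope is one of $\infty, 0, \pm 1, \pm 2, \pm \frac 12$, and by the characterization of \emph{distant} following Definition \ref{defin:distant} a distant rational is not Farey-adjacent to any of these; since $\sigma_c$ is remote it is in particular distant, so $|c\cdot\alpha| = |\frac pq, \frac ab| \geq 2$ and likewise $|c \cdot \beta| \geq 2$.

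For the third inequality I would first exploit that $\alpha$ and $\beta$ are disjoint and non-parallel. By the arc--arc count before Definition \ref{defin:denom}, this forces $|\frac ab, \frac{c'}d| = 1$, i.e.\ $a/b$ and $c'/d$ are Farey neighbors. Set the signed integers $x = pb - qa$ and $y = pd - qc'$, so that $|x| = |c\cdot\alpha|$ and $|y| = |c\cdot\beta|$. The reverse triangle inequality yields the identity $\big||x| - |y|\big| = \min(|x+y|,\,|x-y|)$, and bilinearity of the determinant rewrites the two quantities on the right as $|x+y| = |\frac pq, m_+|$ and $|x - y| = |\frac pq, m_-|$, where $m_+ = \frac{a + c'}{b + d}$ is the Farey mediant of the two close slopes and $m_- = \frac{a - c'}{b - d}$ is their ``anti-mediant.'' Because $a/b$ and $c'/d$ are Farey neighbors, both $m_+$ and $m_-$ are already in lowest terms.

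It therefore suffices to check that for every Farey-adjacent pair of close slopes the mediant $m_+$ and the anti-mediant $m_-$ are both \emph{nearby}; granting this, remoteness of $\sigma_c$ (again via the characterization after Definition \ref{defin:distant}, now that it is not Farey-adjacent to any nearby rational) gives $|\frac pq, m_\pm| \geq 2$, whence $\big||c\cdot\alpha| - |c\cdot\beta|\big| = \min(|\frac pq, m_+|, |\frac pq, m_-|) \geq 2$. This last verification is the computational heart of the argument and the step most in need of care, but it is short. Up to the symmetries $t \mapsto -t$ and $t \mapsto 1/t$ of Figure \ref{fig:Farey} — which preserve the notions close, nearby, distant, and remote — there are only a handful of adjacent close pairs to examine, for instance $(\infty,0)\mapsto\{1,-1\}$, $(\infty,1)\mapsto\{2,0\}$, $(\infty,2)\mapsto\{3,1\}$, $(0,\tfrac12)\mapsto\{\tfrac13,1\}$, $(1,2)\mapsto\{\tfrac32,\infty\}$, and $(1,\tfrac12)\mapsto\{\tfrac23,0\}$. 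In every case both outputs land in the nearby set; indeed the list of nearby rationals in Definition \ref{defin:close} was chosen precisely so that this would hold.
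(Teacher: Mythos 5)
Your proof is correct and follows essentially the same route as the paper: both arguments rest on the facts that disjoint non-parallel arcs in the punctured torus have Farey-adjacent slopes, and that the difference of the two intersection counts equals the intersection count with a slope Farey-adjacent to both of them (your mediant/anti-mediant, the paper's curve $\gamma$), which is therefore a nearby slope, so remoteness of $c$ finishes all three inequalities at once. The only difference is expository: your explicit identity $\big||x|-|y|\big| = \min(|x+y|,|x-y|)$ and the case-by-case verification spell out what the paper dispatches by its symmetry normalization $0 \leq c \cdot \alpha \leq c \cdot \beta$ and the phrase ``it is easy to see.''
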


\begin{proof}  The definition of close slope is so symmetric that we may as well assume that $0 \leq c \cdot \alpha \leq c \cdot \beta$.  Since the arcs $\alpha$ and $\beta$ are disjoint in $\bdd W_-$ the slopes that they represent in $\bdd W$ are adjacent in the Farey tesselation (Figure \ref{fig:Farey}) and the assumption is that the slopes are both given there by the unstarred numbers.  It is easy to see that the number $c \cdot \beta - c \cdot \alpha$ is the same as $c \cdot \gamma$, where $\gamma$ is a simple closed curve whose slope is adjacent in the Farey tesselation to the slopes of both $\alpha$ and $\beta$, which puts $\gamma$ among the nearby slopes, those that appear, either starred or unstarred, in Figure \ref{fig:Farey}.  But then, since $c$ is remote,  $c \cdot \alpha, c \cdot \beta$ and $c \cdot \gamma$ are all at least $2$. 
\end{proof}

\begin{cor} \label{cor:surgintersect} Suppose $W^{surg}$ is the solid torus obtained from $W$ by Dehn surgery on the core of $W$, with a surgery coefficient that is a remote rational.  Suppose $\alpha, \beta$ are disjoint properly embedded non-parallel arcs in $\bdd W_-$ with close slopes. Then $\alpha, \beta$ have high denominators (see \ref{defin:hdenom}) with respect to the meridian of $W^{surg}$.  
\end{cor}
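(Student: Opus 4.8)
The plan is to recognize that this corollary is essentially a repackaging of the preceding proposition, once the correct dictionary between the two setups is in place. By construction the meridian of $W^{surg}$ is the simple closed curve on $\bdd W = \bdd W^{surg}$ whose slope, measured against the specified longitude of $W$, is the surgery coefficient; since that coefficient is assumed remote, this meridian is precisely a curve $c \subset \bdd W_-$ of remote slope, exactly as in the hypothesis of the preceding proposition. Under the identification of $\bdd W_-$ with a punctured torus $F_A$ and of $c$ with the meridian $\bdd A$, the denominator bookkeeping of Definitions \ref{defin:denom} and \ref{defin:hdenom} transfers verbatim, with $\alpha, \beta$ playing the role of the arcs of $C \cap F_A$.

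The only point to record explicitly is that the denominator of an essential arc in $\bdd W_-$ with respect to the meridian $c$ is, by definition, the number of times the arc crosses $c$; that is, $denom(\alpha) = |c \cdot \alpha|$ and $denom(\beta) = |c \cdot \beta|$, these geometric intersection numbers being computed in minimal position, which is automatic on the torus. Thus to verify that $\{\alpha, \beta\}$ has high denominators with respect to the meridian of $W^{surg}$ I need only produce $r, s \in \{|c \cdot \alpha|, |c \cdot \beta|\}$ with $2 \le r \le s - 2$. At this stage I would simply invoke the preceding proposition: because $c$ has remote slope and $\alpha, \beta$ are disjoint, non-parallel arcs of close slope, it yields $|c \cdot \alpha| \ge 2$, $|c \cdot \beta| \ge 2$, and $\big| |c \cdot \alpha| - |c \cdot \beta| \big| \ge 2$. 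Relabelling so that $|c \cdot \alpha| \le |c \cdot \beta|$ and setting $r = |c \cdot \alpha|$, $s = |c \cdot \beta|$, I get $r \ge 2$ and $s - r \ge 2$, hence $2 \le r \le s - 2$, which is exactly the condition of Definition \ref{defin:hdenom}.

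There is no genuine obstacle here: all of the geometric content — that a curve of remote slope is forced to meet arcs of close slope both frequently and with well-separated multiplicities — already resides in the preceding proposition, whose proof extracts the separating curve $\gamma$ and reads off the three inequalities from the Farey picture. The only thing one must get right is the translation between the two settings, namely the recognition that the surgered meridian is the remote-slope curve $c$ and that ``denominator with respect to the meridian of $W^{surg}$'' means geometric intersection number with $c$. Once that identification is made, the corollary follows from the proposition with no further work.
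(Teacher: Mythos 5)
Your proposal is correct and matches the paper's (implicit) argument exactly: the paper states this as an immediate corollary of the preceding proposition, the intended reasoning being precisely your dictionary --- the meridian of $W^{surg}$ is the curve on $\bdd W$ of slope equal to the (remote) surgery coefficient, and the denominator of an arc with respect to that meridian is its geometric intersection number with it --- after which the three inequalities of the proposition give $2 \leq r \leq s-2$ as in Definition \ref{defin:hdenom}.
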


In a similar vein, it will be useful to have this combinatorial lemma, which will eventually be applied in each punctured torus $F_A, F_B$:

\begin{lemma} \label{lemma:octagon}  Suppose $\alpha$ and $\beta$ are non-parallel disjoint essential proper arcs in a punctured torus $T$.  Suppose $c$ is a finite set of disjoint essential simple closed curves in $T$ isotoped to minimally intersect $\alpha, \beta$ so that $|\alpha \cap c| = p < q =|\beta \cap c|$.   Let $P$ denote the octagon $T - \eta(\alpha \cup \beta)$, with four sides coming from $\bdd T$ and two sides coming from each of $\alpha, \beta$.  Then the arcs $c \cap P$ consist of $2p$ arcs that are each parallel to one the $4$ sides in $P$ coming from $\bdd T$, and $q - p$ parallel arcs, each essential in $T$, that are not parallel to any side of $P$.
\end{lemma}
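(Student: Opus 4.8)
The plan is to cut $T$ open along $\alpha\cup\beta$ and then account for the arcs of $c$ entirely combinatorially inside the octagon $P$, using minimal position to limit the arc types and a short endpoint count to pin down their numbers.

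First I would record the cyclic order of the eight sides of $P$. Since $\alpha$ and $\beta$ are disjoint, non-parallel, essential arcs in the once-punctured torus, cutting $T$ first along the non-separating arc $\alpha$ produces an annulus in which $\beta$ is an essential spanning arc; cutting again along $\beta$ produces the disk $P$. Tracking the two copies each of $\alpha,\beta$ and the remnants of $\partial T$ shows the sides occur in the cyclic order
\[
\alpha_1,\ \partial_1,\ \beta_1,\ \partial_2,\ \alpha_2,\ \partial_3,\ \beta_2,\ \partial_4,
\]
with the gluing that rebuilds $T$ identifying $\alpha_1$ with $\alpha_2$ and $\beta_1$ with $\beta_2$. (Equivalently, present $T$ as the square with edge word $\alpha\beta\alpha^{-1}\beta^{-1}$ and the puncture at the vertex; deleting a neighborhood of the vertex truncates the four corners into the four $\partial$-sides.) In particular the two copies of $\alpha$ are ``opposite'' on $\partial P$, separated on each side by a single copy of $\beta$, and symmetrically for $\beta$.

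Next I would classify the arcs of $c\cap P$. Every component of $c$ misses $\partial T$, so each arc of $c\cap P$ has both endpoints among the four interior sides $\alpha_1,\alpha_2,\beta_1,\beta_2$; by minimality there are $p$ endpoints on each $\alpha$-side and $q$ on each $\beta$-side, for $p+q$ arcs in all. An innermost arc with both feet on a single interior side would bound, together with the subinterval of that side between its feet, a disk meeting $\partial P$ only in that side; this is a bigon, and isotoping $c$ across it would lower $|c\cap\alpha|$ or $|c\cap\beta|$. So minimal position leaves only the six arc types joining two distinct interior sides: the two \emph{spanning} types $\alpha_1$--$\alpha_2$ and $\beta_1$--$\beta_2$, and the four \emph{corner} types, each joining an $\alpha$-side to an adjacent $\beta$-side and hence parallel in $P$ to the single intervening $\partial$-side.

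Finally I would count. Let $x,y$ be the numbers of $\alpha$- and $\beta$-spanning arcs and $N$ the total number of corner arcs. In the disk $P$ an $\alpha_1$--$\alpha_2$ chord separates $\beta_1$ from $\beta_2$, so it cannot be disjoint from any $\beta_1$--$\beta_2$ chord; hence $x=0$ or $y=0$. Counting endpoints on the two $\alpha$-sides and on the two $\beta$-sides gives $2x+N=2p$ and $2y+N=2q$, so $x-y=p-q<0$. Thus $y=0$ is impossible, forcing $x=0$, and then $N=2p$ and $y=q-p$. The $q-p$ spanning arcs join the opposite sides $\beta_1,\beta_2$, so as disjoint chords across one pair of opposite sides they are mutually parallel, parallel to no side of $P$, and essential in $T$; the remaining $2p$ corner arcs are each parallel to one of the four $\partial$-sides, exactly the asserted decomposition. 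The bulk of this is bookkeeping; the only real care I expect is in fixing the normal form of the octagon for an arbitrary disjoint non-parallel essential pair $\alpha,\beta$ (so that the six-type list and the opposite-side adjacencies are genuinely justified, not merely checked on a model). After that, the single identity $x-y=p-q$ carries the argument, and it is precisely the hypothesis $p<q$ that eliminates the $\alpha$-spanning arcs.
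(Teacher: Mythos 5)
Your proof is correct and takes essentially the same route as the paper: the core of the paper's argument is precisely your two observations that an $\alpha_1$--$\alpha_2$ chord and a $\beta_1$--$\beta_2$ chord of the octagon must cross, and that the endpoint count together with $p<q$ then forces $2p$ corner arcs and $q-p$ $\beta$-spanning arcs. You merely make explicit what the paper delegates to its figure and to minimal position, namely the cyclic order of the octagon's sides and the bigon argument ruling out arcs returning to the same side.
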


\begin{proof}  This is an elementary counting argument in the octagon $P$, see Figure \ref{fig:octagon}.  There cannot simultaneously be an arc in $P$ connecting both copies of $\alpha$ in $\bdd P$ and an arc in $P$ connecting both copies of $\beta$ in $\bdd P$, since such arcs cross.  Since $q > p$ it follows that all arcs with one end in a copy of $\alpha$ must have other end in a copy of $\beta$ so there are $2p$ of these.  The remaining arcs must connect one copy of $\beta$ to the other, using up the other $2q - 2p$ endpoints.  
\end{proof}

In Figure \ref{fig:octagon}, the four arcs of $c$ immediately parallel to the four $\bdd T$-sides of $P$ together constitute a component of $c$ that is parallel to $\bdd T$.  So, if $c$ is a single non-separating curve in $T$ (as it will be in the application), then two opposite $\bdd T$-sides of $P$ will not be parallel to any arcs of $c \cap P$ and the other pair of $\bdd T$-sides of $P$ will each be parallel to $p$ arcs of $c \cap P$.  

    \begin{figure}[ht!]
 \labellist
\small\hair 2pt
\pinlabel \color{blue}{$\bdd T$} at 25 290
\pinlabel \color{blue}{$\bdd T$} at 300 290
\pinlabel \color{blue}{$\bdd T$} at 25 25
\pinlabel \color{blue}{$\bdd T$} at 300 25
\pinlabel \color{red}{$c$} at 150 150
\pinlabel \color{black}{$\beta$} at -20 155
\pinlabel $\beta$ at 355 155
\pinlabel $\alpha$ at 170 340
\pinlabel $\alpha$ at 170 -10

 \endlabellist
    \centering
    \includegraphics[scale=0.5]{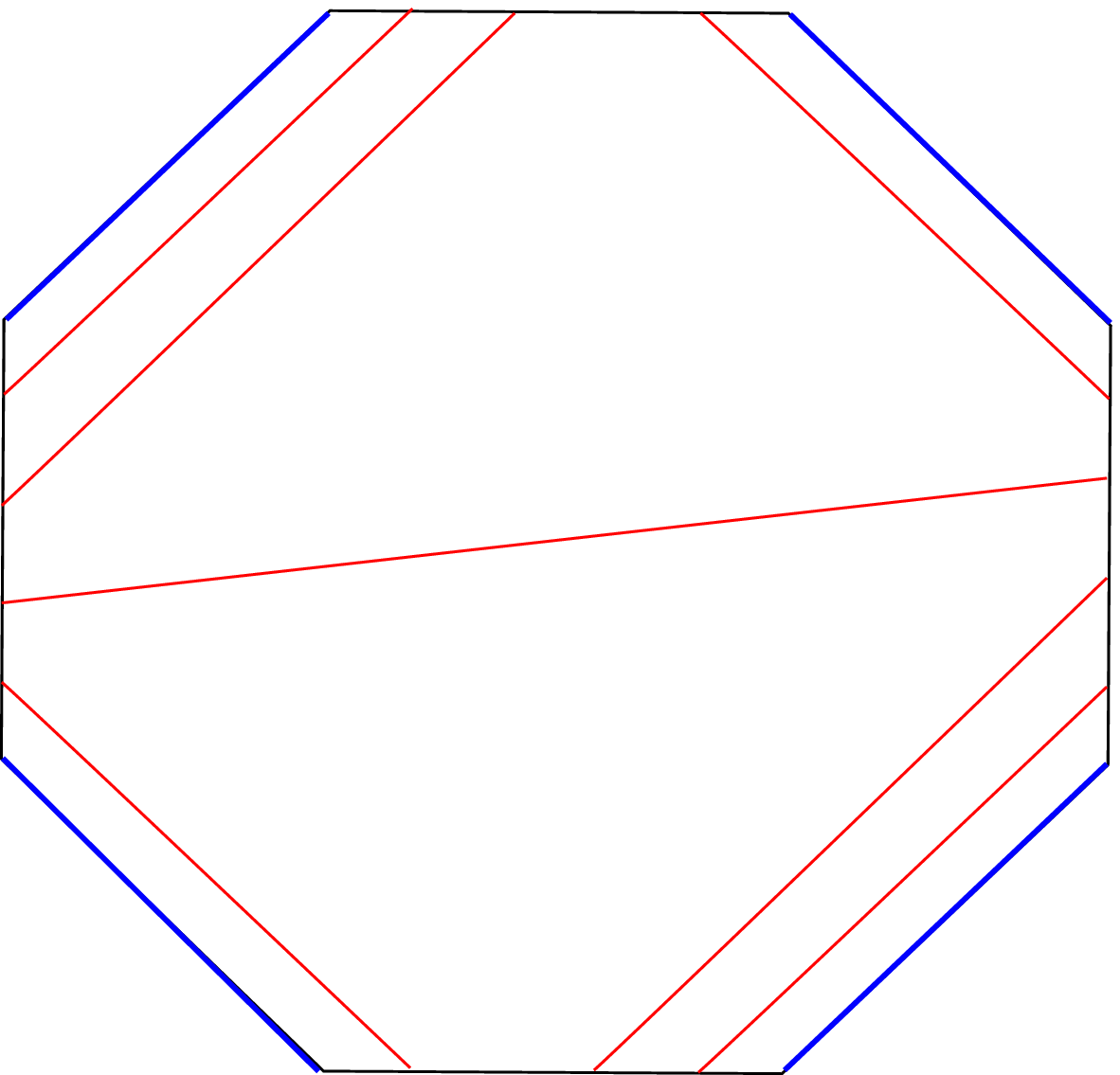}
    \caption{} \label{fig:octagon}
    \end{figure}
    
\section{The construction} \label{sect:construct}

Return now to the notation at the end of Section \ref{sect:standtrip}.  Let $F_P$ be the component of $F - \bdd \Gamma_B^{\Gamma_A}$ whose boundary contains the rectangles in $F_A$ and let $F_Q$ be the component of $F - \bdd \Gamma_B^{\Gamma_A}$ whose boundary contains the rectangles in $F_B$.

\begin{prop} \label{prop:twistedistance}
Suppose $\Gamma_B, \Gamma, \Gamma_A$ is a standard triple of separating meridians in $H$.  Suppose $c$ is a simple closed curve in $F$ that is disjoint from $\Gamma_B$ and, in the solid torus component of $H - \Gamma_B$ in which it lies it is of distant slope.  
\begin{itemize}
\item If $c^{\Gamma_A} \subset F_P$ then $c^{\Gamma_A} \cap F_A$ contains rectangles of slopes $-1$ and $\infty$ and $c^{\Gamma_A} \cap F_B$ contains rectangles with at least two of the four slopes $\infty, 0, 1, \frac 12$ .  
\item If  $c^{\Gamma_A} \subset F_Q$ then $c^{\Gamma_A} \cap F_B$ contains rectangles of slopes $+1$ and $0$ and $c^{\Gamma_A} \cap F_A$ contains rectangles with at least two of the four slopes $-1, \infty, 0, -2$.  
\end{itemize}
\end{prop}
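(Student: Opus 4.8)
The plan is to bootstrap from the explicit model of $\bdd\Gamma_B^{\Gamma_A}$ recorded in Example~\ref{example:halftwist}, promoting it from the single curve $\bdd\Gamma_B$ to an arbitrary distant $c$ by exploiting that $\tau_{\Gamma_A}$ is a homeomorphism. First I would settle the dichotomy. Since $\tau_{\Gamma_A}$ is a homeomorphism of $F$ sending $\bdd\Gamma_B$ to $\bdd\Gamma_B^{\Gamma_A}$, and $c$ is disjoint from $\bdd\Gamma_B$, the image $c^{\Gamma_A}$ is disjoint from $\bdd\Gamma_B^{\Gamma_A}$; being connected, it lies in exactly one of $F_P, F_Q$, the choice being governed by which solid torus component of $H-\Gamma_B$ contains $c$ (these two solid tori are carried by $\tau_{\Gamma_A}$ to the two sides of $\bdd\Gamma_B^{\Gamma_A}$). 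This is precisely the case split in the statement, and by the symmetry of the standard triple exchanging the roles of $A$ and $B$ I would treat only $c^{\Gamma_A}\subset F_P$.

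Next I would pin down the candidate slopes from disjointness alone. By Example~\ref{example:halftwist}, $\bdd\Gamma_B^{\Gamma_A}$ meets $F_A$ in arcs of slopes $\infty$ and $-1$ and meets $F_B$ in arcs of slopes $0$ and $1$. Since an arc of $c^{\Gamma_A}\cap F_A$ is disjoint from both families, the determinant formula for $\rho_A$ (disjoint non-parallel arcs have determinant $1$) forces its slope $s$ to satisfy $|s,\infty|\le 1$ and $|s,-1|\le 1$, whose only solutions are $s\in\{\infty,0,-1,-2\}$; the same computation in $F_B$ gives $s\in\{\infty,0,1,\tfrac12\}$. These are exactly the two four-element sets named in the proposition, so what remains is to realize the relevant slopes as \emph{rectangles}: both extreme slopes $\infty,-1$ on the rectangle-carrying side $F_A$, and at least two of $\infty,0,1,\tfrac12$ on the free side $F_B$.

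The crux is this realization step, where distant slope enters. On the $F_P$ side the boundary $\bdd\Gamma_B^{\Gamma_A}$ supplies the slope-$\infty$ and slope-$(-1)$ rectangles inside $F_A$, so $F_P\cap F_A$ is foliated by bands parallel to these and every arc of $c^{\Gamma_A}\cap F_A$ inherits slope $\infty$ or $-1$. To show both slopes occur with multiplicity at least two — enough to bound a rectangle in the sense of Definition~\ref{defin:dhdenom} — I would read the number of arcs of a given slope as a geometric intersection number $|c^{\Gamma_A}\cdot\gamma|$ for a suitable curve $\gamma\subset F_A$; since $\tau_{\Gamma_A}$ is a homeomorphism this equals $|c\cdot\tau_{\Gamma_A}^{-1}(\gamma)|$, and the explicit standard-triple picture identifies $\tau_{\Gamma_A}^{-1}(\gamma)$ with one of the close-slope curves in the solid torus carrying $c$. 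Distantness (Definition~\ref{defin:distant}) makes each such count at least $2$, yielding a rectangle of each slope, and Lemma~\ref{lemma:octagon} organizes the parallel arcs into these rectangles while certifying that no stray slopes intrude. The identical count carried out in $F_B$, where $c^{\Gamma_A}$ now sits on the unconstrained side of $\bdd\Gamma_B^{\Gamma_A}$, produces rectangles of at least two of the slopes $\infty,0,1,\tfrac12$.

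I expect the bookkeeping of this third step to be the main obstacle. One must match each candidate rectangle slope in $F_A$ and in $F_B$ to a definite close-slope test curve in the solid torus component of $H-\Gamma_B$, and then verify that the determinants $|p|,|q|,|p\pm q|,|p\pm 2q|,|2p\pm q|$ listed in Definition~\ref{defin:distant} are exactly the intersection numbers forced to be at least $2$. The asymmetry between the forced side (exactly the two extreme slopes) and the free side (at least two of four) must fall out of this matching, and getting the half-twist's action on slopes right — including the sign ambiguities flagged immediately after Example~\ref{example:halftwist} — is where the genuine care is required.
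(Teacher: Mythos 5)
Your overall strategy --- untwist counts through the homeomorphism $\tau_{\Gamma_A}^{-1}$, invoke distantness to force counts $\geq 2$, and call on Lemma~\ref{lemma:octagon} --- is the paper's strategy, and your treatment of the forced side is essentially the paper's argument: when $c^{\Gamma_A}\subset F_P$ the arcs of $c^{\Gamma_A}\cap F_A$ are confined to the two rectangles of $\bdd\Gamma_B^{\Gamma_A}$ (so Lemma~\ref{lemma:octagon} is not even needed there), the transversal of the slope-$(-1)$ rectangle untwists to a meridional arc of the solid torus and the transversal of the slope-$\infty$ rectangle untwists to a slope-$(+1)$ arc, so for $c$ of slope $\frac rs$ the two counts are $|s|\geq 2$ and $|r-s|\geq 2$, producing both rectangles in $F_A$.

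The gap is on the free side. Your proposed mechanism --- ``match each candidate rectangle slope in $F_A$ and in $F_B$ to a definite close-slope test curve'' and force each count to be at least $2$ --- cannot work in $F_B$, because individual slope counts there are genuinely not bounded below. By the remark following Lemma~\ref{lemma:octagon}, a single non-separating curve has arcs parallel to only one opposite pair of the octagon's $\bdd T$-sides, so for instance the number of slope-$0$ arcs may be zero while all side-parallel arcs have slope $1$; moreover slopes $\infty$ and $\frac 12$ have determinant $2$, so rectangles of both can never coexist in a disjoint family. A slope-by-slope count forcing each of the four slopes would therefore be proving something false; this is exactly why the conclusion is only ``at least two of the four slopes.'' What the paper actually does is feed the two counts already computed on the forced side into Lemma~\ref{lemma:octagon}, applied with $T=F_P$ and the octagon obtained by removing the two rectangles $F_P\cap F_A$: the lemma yields $2\min\{|s|,|r-s|\}\geq 4$ arcs parallel to the $\bdd\Gamma_B^{\Gamma_A}$-sides (hence of slope $0$ or $1$, giving one rectangle with one of those two slopes), together with $\bigl|\,|r-s|-|s|\,\bigr|$ parallel arcs of a third slope, necessarily $\infty$ or $\frac 12$. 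The numerical point your proposal never isolates is that this difference equals $|r-2s|$, which is $\geq 2$ precisely by the $|p\pm 2q|\geq 2$ clause of Definition~\ref{defin:distant}; that is what produces the second rectangle. So the free side is not an ``identical count'': it is controlled by the \emph{difference} of the two forced-side counts, not by any test curve in $F_B$. (A smaller caveat: the $F_Q$ case is not literally obtained by exchanging $A$ and $B$, since the twist stays along $\Gamma_A$; the paper reruns the same argument with transversals untwisting to arcs of slopes $0$ and $-1$, so different clauses of distantness are invoked.)
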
  

\begin{proof}  Suppose first that $c^{\Gamma_A} \subset F_P$.  Following Example \ref{example:halftwist}, in order to show that $c^{\Gamma_A} \cap F_A$ contains rectangles of slopes $-1$ and $\infty$ it suffices to show that a transversal of each of the two rectangles of $\bdd \Gamma_B^{\Gamma_A}$ in $F_A$ intersects $c^{\Gamma_A}$ in at least two points.  Untwisting the $\Gamma_A$ twist moves $c^{\Gamma_A}$ back to $c$, moves $\bdd \Gamma_B^{\Gamma_A}$ back to $\Gamma_B$ and moves the transversal of the $-1$-sloped rectangle to a meridional arc in the solid torus $H - \Gamma_B$.  (See the left red arc in Figure \ref{fig:transversals}.)  The simple closed curve $c$ has distant slope, say slope $\frac{r}{s}$, so it intersects a meridian of that torus (and so a meridional arc) in $|s| \geq 2$ points.  Similarly, the transversal to the $\infty$-sloped rectangle twists back to an arc of slope $+1$, so $c$ will intersect this arc in at least $|r - s| \geq 2$ points as well.  (See the other red arc in Figure \ref{fig:transversals}.)

    \begin{figure}[ht!]
  \centering
    \includegraphics[scale=0.5]{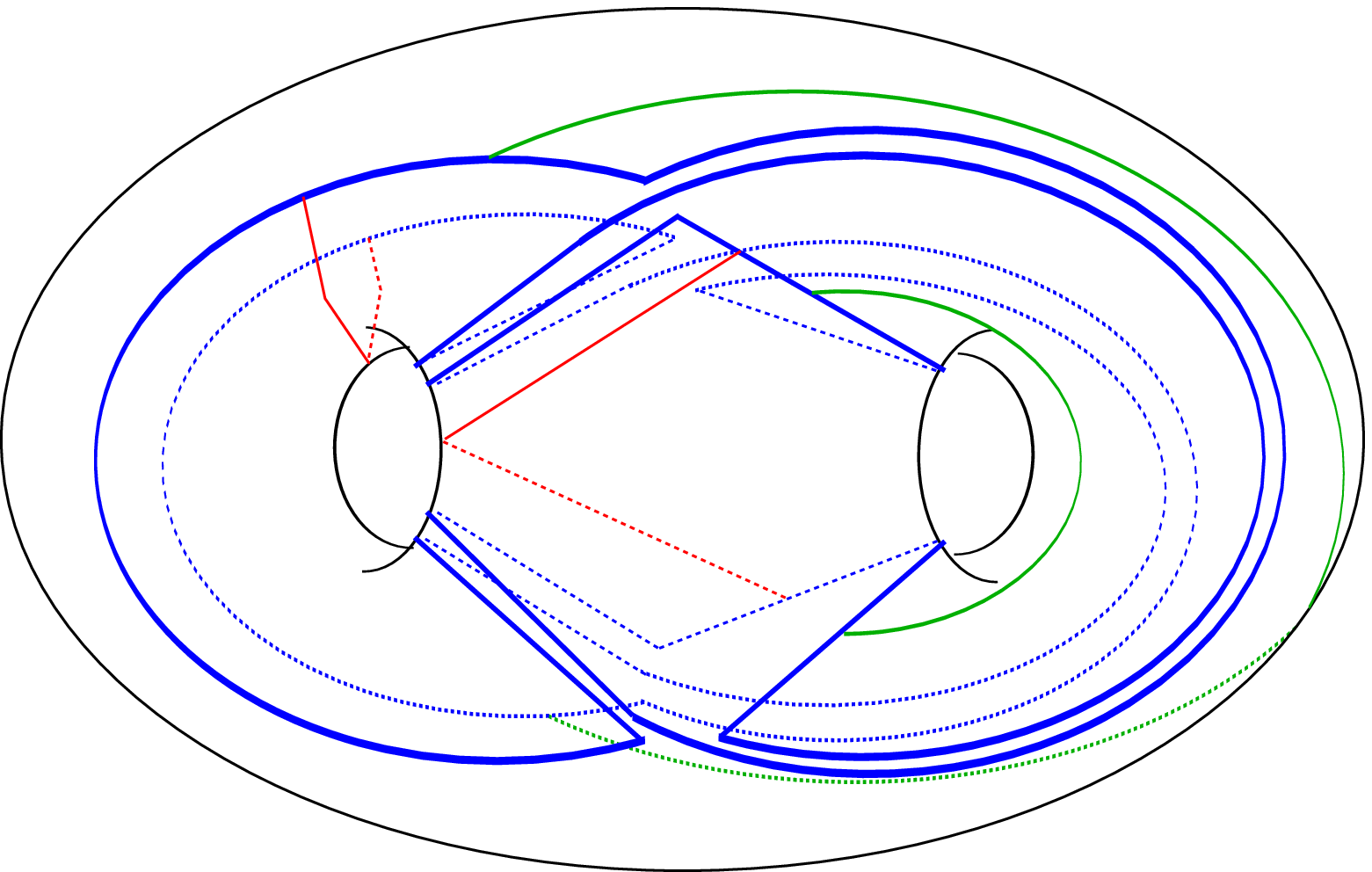}
    \caption{} \label{fig:transversals}
    \end{figure}

To understand $c^{\Gamma_A} \cap F_B$ note that $c^{\Gamma_A}$ lies in the octagon obtained by removing from $F_B$ the two rectangles $F_Q \cap F_B$ or, equivalently, removing from $F_P$ the two rectangles $F_P \cap F_A$.  View the octagon from the latter point of view and invoke Lemma \ref{lemma:octagon}.  We have just shown that $c$ intersects one transversal in $|r - s|$ points and the other in $|s| $ so, according to that lemma, there are  at least $min\{2(|r - s|), 2(|s|)\} \geq 4$ arcs  which have slopes the same as one of the rectangles $F_Q \cap F_B$, that is $0$ or $1$.  Hence there is at least one rectangle with one of those slopes.  Lemma \ref{lemma:octagon} also says there are $||r-s| - |s||$ arcs of $c^{\Gamma_A} \cap F_B$ of a third slope (necessarily then slope $\infty$ or $\frac 12$) in $F_B$.  But $c$ has distant slope in the half of $H - \Gamma_B$ in which it lies, and the definition of this condition is so symmetric that we may as well assume for this computation that $0 < s < r$.  In that case $||r-s| - |s|| = r - 2s \geq 2$ (see Definiton \ref{defin:distant}).  Hence there is a rectangle of $c^{\Gamma_A} \cap F_B$ of slope $\infty$ or $\frac 12$ as well.

If instead $c^{\Gamma_A} \subset F_Q$, the situation is much the same, except the transversals are carried to arcs of slope $0$ and $-1$ instead of $\infty$ and $1$. (See the green arcs in Figure \ref{fig:transversals}.)  But a symmetric argument then works just as well.
\end{proof}

\begin{cor} \label{cor:twistedistance}
Under the assumption of Proposition \ref{prop:twistedistance}, $c^{\Gamma_A} \cap F_A$ and $c^{\Gamma_A} \cap F_B$ each contain rectangles with two distinct close slopes.  
\end{cor}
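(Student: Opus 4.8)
The plan is to observe that this corollary is a direct bookkeeping consequence of Proposition \ref{prop:twistedistance} together with Definition \ref{defin:close}, requiring no new geometric input. First I would recall that the close rationals are precisely $\infty, 0, \pm 1, \pm 2, \pm \frac 12$, and then simply check, in each of the two cases distinguished by whether $c^{\Gamma_A}$ lies in $F_P$ or in $F_Q$, that every slope named in the conclusion of Proposition \ref{prop:twistedistance} belongs to this list and that two distinct such slopes are produced in each of $F_A$ and $F_B$.

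In the case $c^{\Gamma_A} \subset F_P$, Proposition \ref{prop:twistedistance} supplies rectangles in $F_A$ of slopes $-1$ and $\infty$, both of which are close and are obviously distinct; it also supplies rectangles in $F_B$ realizing at least two of the slopes $\infty, 0, 1, \frac 12$, all four of which are close, so two distinct close slopes occur in $F_B$ as well. In the case $c^{\Gamma_A} \subset F_Q$ the symmetric bookkeeping applies: the rectangles in $F_B$ have slopes $+1$ and $0$, both close and distinct, while the rectangles in $F_A$ realize at least two of $-1, \infty, 0, -2$, each of which is again close. Since in every case both $c^{\Gamma_A} \cap F_A$ and $c^{\Gamma_A} \cap F_B$ are seen to contain rectangles of two distinct close slopes, the corollary follows.

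I do not expect a genuine obstacle here: the corollary is essentially a restatement of Proposition \ref{prop:twistedistance} once the word ``close'' is unpacked. The only points requiring attention are confirming that each of the eight slopes appearing across the two cases lies in the close set $\{\infty, 0, \pm 1, \pm 2, \pm \frac 12\}$, which is immediate by inspection of Definition \ref{defin:close}, and that the two slopes produced in each punctured torus are genuinely distinct---which holds because the explicit pairs $\{-1,\infty\}$ and $\{1,0\}$ are distinct and the ``at least two of four'' clauses force distinctness by construction.
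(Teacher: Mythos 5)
Your proof is correct and matches the paper's (implicit) reasoning: the paper states this corollary without proof precisely because it is the bookkeeping you carried out, namely checking that every slope produced by Proposition \ref{prop:twistedistance} ($-1, \infty, 0, 1, \frac 12, -2$) lies in the close set $\{\infty, 0, \pm 1, \pm 2, \pm \frac 12\}$ of Definition \ref{defin:close} and that two distinct such slopes occur in each of $F_A$ and $F_B$ in both cases.
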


Begin the construction of a Heegaard split closed $3$-manifold by considering a second genus $2$ handlebody $J$ containing two non-parallel non-separating meridians $X, Y \subset J$, specified longitudes $\lambda_X, \lambda_Y$,  and a corresponding standard triple $\Lambda_X, \Lambda, \Lambda_Y \subset J$ of separating meridians.  There is a natural identification $h_0: J \to H$ given by identifying the named subdisks in their named order, so
\begin{align*} 
h_0(X) &= A, & h_0(Y) &= B, & h_0(\Lambda_X) &= \Gamma_A, &
h_0(\Lambda_Y) &= \Gamma_B, & h_0(\Lambda) &= \Gamma. 
\end{align*}

 Let $h_1: J \to H$ be the composition of $h_0$ with $\frac{2\pi}3$ rotation about the vertical axis  (see Figure \ref{fig:standtrip}). Then
 \begin{align*}
 h_1(\Lambda_Y) &= \Gamma_A, & h_1(\Lambda) &= \Gamma_B, & h_1(\Lambda_X) &= \Gamma, \\
 h_1(\lambda_Y) &= \lambda_A, & h_1(\lambda_{XY}) &= \lambda_B, & h_1(\lambda_X) &= \lambda_{AB}.
 \end{align*}
Follow $h_1$ by a left half-twist along the disk $\Gamma_A$ and call the result $h_2:J \to H$.  Since in each case the homeomorphism is from handlebody to handlebody, the Heegaard splitting $H \cup_{h_i|\bdd J} J, i = 0, 1, 2$ is the same as just doubling the handlebody $H$ along its boundary, i. e. the standard (distance $0$) Heegaard splitting of $(S^1 \times S^2)\#(S^1 \times S^2)$.  

Now alter $J$ by performing Dehn surgery on the cores $c_X, c_Y$ of the two solid torus components $J_X, J_Y$ of $J - \Lambda$.  Choose distant slopes for the surgery and call the result $J^{surg}$.  The disk $\Lambda$ still divides $J^{surg}$ into two solid tori,  which we call $J^{surg}_X$ and $J^{surg}_Y$.  Hence $J^{surg}$ is still a handlebody, but the meridian curve $x \subset \bdd J^{surg}_X$ (resp $y \subset \bdd J^{surg}_Y$) now has distant slope with respect to the original meridian $\bdd X$ (resp $\bdd Y$) and the original longitude $\lambda_X$ (resp $ \lambda_Y$). In particular, since $h_1$ carries $\bdd \Lambda$ to $\Gamma_B$, it follows from Corollary \ref{cor:twistedistance} that in the Heegaard splitting $H \cup_{h_2|\bdd J} J^{surg}$, the two meridians $x$ and $y$ each have rectangles with two distinct close slopes in each of the punctured tori $F_A$ and  $F_B$.  



Let $H^{surg}$ denote the handlebody obtained from $H$ by doing Dehn surgery with remote slopes on both cores $c_A \subset H_A$ and $c_B \subset H_B$.  It follows from Corollary \ref{cor:surgintersect} and Theorem \ref{thm:main} that the Heegaard splitting $H^{surg} \cup_{h_2|\bdd J} J^{surg}$ has distance at least $3$.  

\section{Afterword: creating distinct Heegaard splittings, each of distance $3$} \label{sect:afterword}

Part of the interest in Berge's original construction is that the closed $3$-manifold he constructs also has a second Heegaard splitting, not homeomorphic to the first splitting, but also one of distance $3$.  This discovery illustrates that, in \cite{RS}, the listing  of all possible ways in which a $3$-manifold might have distinct genus $2$ Heegaard splittings is incomplete, see \cite{Be}, \cite{BS}.

So it is interesting to observe that the general Dehn surgery construction described above has the same property: the manifold created from $(S^1 \times S^2)\#(S^1 \times S^2)$ by Dehn surgery on the specified $4$-component link has two (typically different) distance $3$ Heegaard splittings, and the pair of splittings are related in a way that is described in \cite{BS}.  Here is the argument:

In the construction above, the curves $h_1(\lambda_X) = \lambda_{AB}$,$h_1(\lambda_{XY}) = \lambda_B$ and $\bdd \Gamma_A$ are all disjoint in $F$.  Since $h_2$ differs from $h_1$ by a half-twist along $\Gamma_A$ it follows that, even after the half-twist, $h_2(\lambda_X) = \lambda_{AB}$ and $h_2(\lambda_{XY}) = \lambda_B$.  Hence in $H \cup_{h_2|\bdd J} J$ the cores $c_X$ of $J_X$ and $c_B$ of $H_B$ can be isotoped to disjoint curves in $F$ and then past each other,  so that afterwards $c_X \subset H$ and  $c_B \subset J$.  See Figure \ref{fig:schematics} for a highly schematic account of this construction.  After the isotopy, all the core curves are in essentially the same sort of position with respect to each other and with respect to the disk $\Gamma_A$,  so surgery on the link will again alter the splitting to one that is distance $3$. (Technically the surgery slope on $c_X$, now in $H$ and so playing the role of $c_B$, needs to be not just distant but in fact remote.)  An explicit way of seeing the equivalence is to note that reflecting $(S^1 \times S^2)\#(S^1 \times S^2)$ through the separating sphere $\Gamma_A \cup \Lambda_Y$ brings the set of $4$ cores after the exchange to exactly the same set of curves in the original construction; the only difference in the construction of the second splitting is that the reflection changes the left half-twist along $\Gamma_A$ to a right half-twist.

    \begin{figure}[ht!]
 \labellist
\small\hair 2pt
\pinlabel $c_X$ at 0 430
\pinlabel $c_X$ at 290 430
\pinlabel $c_X$ at 380 160
\pinlabel $\Lambda$ at 100 455
\pinlabel $h_0$ at 140 440
\pinlabel $c_Y$ at 250 430
\pinlabel $c_Y$ at 530 430
\pinlabel $c_Y$ at 540 200
\pinlabel $h_1$ at 430 440
\pinlabel $h_1$ at 430 210
\pinlabel $h_1$ at 140 205
\pinlabel $\lambda_X$ at 10 400
\pinlabel $\lambda_A$ at 50 390
\pinlabel $\lambda_A$ at 480 390
\pinlabel $\lambda_B$ at 195 390
\pinlabel $\lambda_B$ at 410 280
\pinlabel $\lambda_Y$ at 240 390
\pinlabel $c_A$ at 100 390
\pinlabel $c_A$ at 440 390
\pinlabel $c_A$ at 440 160
\pinlabel $c_B$ at 420 320
\pinlabel $c_B$ at 150 390
\pinlabel $c_B$ at 440 10
\pinlabel $\Gamma$ at 125 410
\pinlabel $\Lambda_Y$ at 10 300
\pinlabel $\Gamma_B$ at 90 310
\pinlabel $\lambda_{AB}$ at 125 300
\pinlabel $\lambda_{AB}$ at 370 380
\pinlabel $\Gamma_A$ at 160 310
\pinlabel $\Gamma_A$ at 370 330
\pinlabel $\Gamma_A$ at 370 100
\pinlabel $\Lambda_X$ at 230 300
\pinlabel $\lambda_{XY}$ at 125 260
\pinlabel $\lambda_{XY}$ at 410 260
\pinlabel \LARGE{$H$} at 125 370
\pinlabel \LARGE{$J$} at -25 370
  \endlabellist
    \centering
    \includegraphics[scale=0.6]{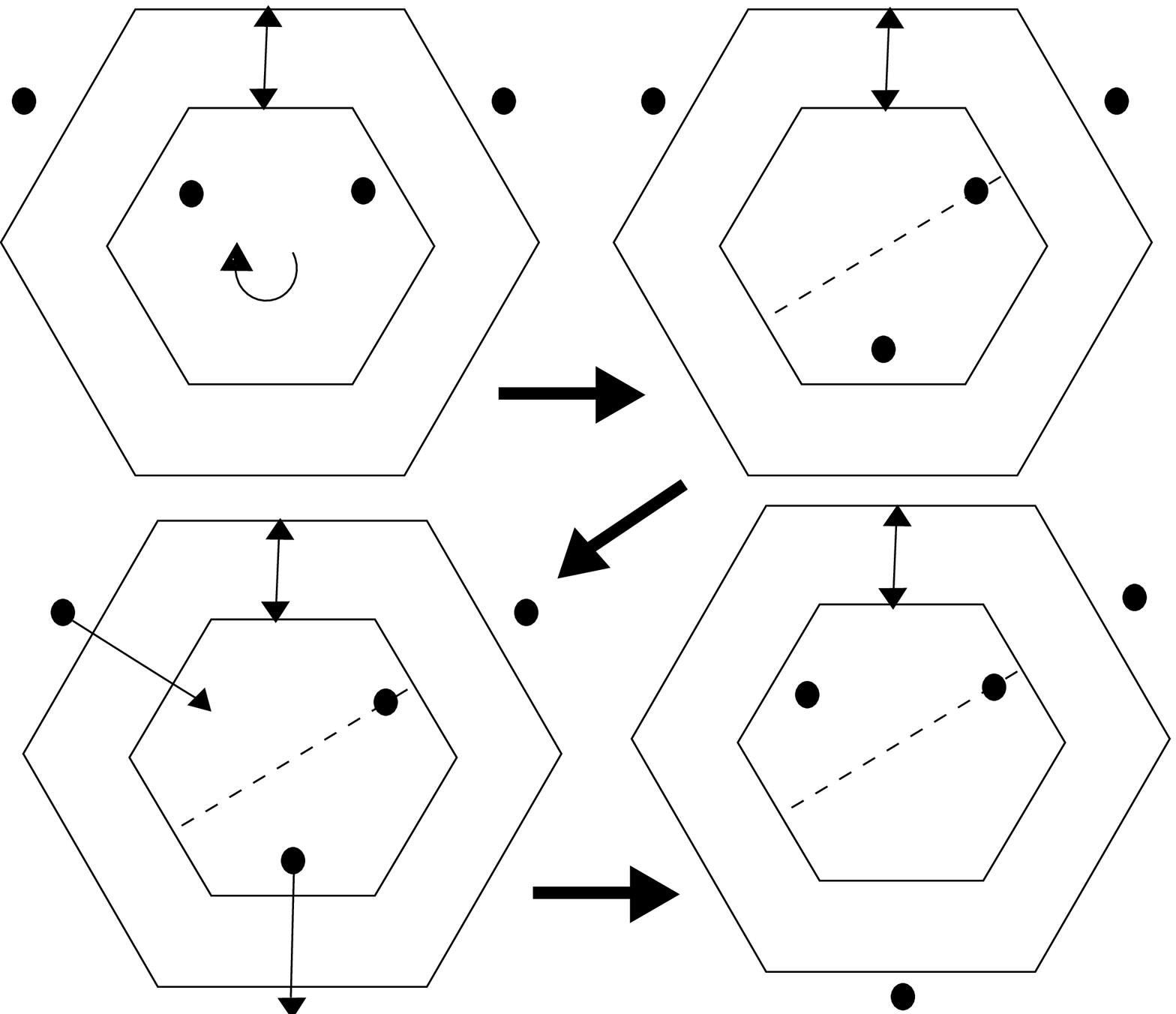}
    \caption{} \label{fig:schematics}
    \end{figure}

The process just described, exchanging two of the cores of the genus two handlebodies before doing Dehn surgery on all four cores, is exactly the construction used in Section 3 of \cite{BS} to describe a manifold $M_H$ with two alternative genus two Heegaard splittings.  (Examples of this sort were shown in \cite{BS} to have the property that a single stabilization makes the two alternative splittings isotopic.)

Berge observes (\cite[Section 5.6]{Be2}) that the discussion of SUMS in Section \ref{sect:SUMS} leads to an easy proof that in most cases the two alternative splittings are not homeomorphic.  First note that the construction is highly symmetric: the roles of $H$ and $J$ can easily be reversed, replacing $h_1: \bdd J \to \bdd H$ by $h_1^{-1}: \bdd H \to \bdd J$ and replacing the half-twist around $\Gamma_A \subset H$ by a half-twist around $\Lambda_Y \subset J$.  So if the surgeries on all cores are chosen to have remote slope then, following Corollary \ref{cor:SUMS}, both the pairs $\{ A, B \}$ and $\{ X, Y \}$ are SUMS for the splitting and so are uniquely defined by the homeomorphism type of the splitting.  In particular, the quadruple of numbers $\{ |\bdd A \cdot \bdd X|, |\bdd A \cdot \bdd Y|, |\bdd B \cdot \bdd X|, |\bdd B \cdot \bdd Y| \}$ is an invariant of the Heegaard splitting up to homeomorphism.  So to show that the two Heegaard splittings of the same manifold are not equivalent, it suffices to check that the quadruple of numbers changes when the two cores are exchanged as above.  This is done for a specific example in \cite{Be2}, where the sum of the four numbers changes from 121 to 149. 

Berge has also pointed out that there are other invariants available to distinguish the splittings, ones that are easier to calculate from a Heegaard diagram or from a Dehn surgery description.  In the case in which the meridians of the two handlebodies are SUMS, as we have here, the separating meridians $\Gamma$ and $\Lambda$ are determined up to homeomorphism.  So, for each arc $\alpha$ of $(\bdd X \cup \bdd Y) \cap F_A$ there is associated a number $|\alpha \cdot \bdd A| \geq 2$.  When all such arcs are accounted for this yields either a pair or a triple of numbers $\{ a_1, a_2, (a_3)\}$ associated to $F_A$.  There is a similar pair or triple of numbers $\{ b_1, b_2, (b_3)\}$ associated to $F_B$ and, from $J$, a further pair of triples or pairs, $\{ \{ x_1, x_2, (x_3)\}, \{ y_1, y_2, (y_3)\} \}$.  The pair of pairs of pairs or triples $\{ \{ \{ a_1, a_2, (a_3)\}, \{ b_1, b_2, (b_3)\} \}, \\ \{ \{ x_1, x_2, (x_3)\} \{ y_1, y_2, (y_3)\} \} \}$ is an invariant for each such splitting, one that can be calculated from a Heegaard diagram or from a Dehn surgery description. 

{\small  }



\vspace{10mm}

\baselineskip 14pt \noindent {\sf Department of Mathematics\\ University 
of 
California\\ Santa Barbara, CA 93106\\ mgscharl@math.ucsb.edu}


\begin{thebibliography}{5}

\bibitem[Be]{Be} J.~Berge, A classification of pairs of disjoint nonparallel primitives in the boundary of a genus two handlebody, arXiv:0910.3038.

\bibitem[Be2]{Be2} J.~Berge, A closed orientable $3$-manifold with distinct distance three genus two Heegaard splittings, arXiv:0912.1315.

\bibitem[BS]{BS} J.~Berge and M.~Scharlemann, Multiple genus 2 Heegaard splittings: a missed case, arXiv:0910.3921.

\bibitem[He]{He} J.~Hempel, 3-manifolds as viewed from the curve 
complex, {\em Topology} {\bf 40} (2001) 631--657.

\bibitem[RS]{RS} H.~Rubinstein and M.~Scharlemann, Genus two Heegaard splittings of orientable 3-manifolds, in \underline{Proceedings of the 1998 Kirbyfest}, {\em Geometry and Topology Monographs} {\bf 2}  (1999) 489-553.

\bibitem[Th]{Th} A.~Thompson, The disjoint curve property and genus $2$ manifolds,
{\em Topology Appl.} {\bf 97} (1999) 273--279. 


\end{thebibliography}
\end{document}